\newcommand{\Q}{\mathbb{Q}}
\newcommand{\Z}{\mathbb{Z}}
\newcommand{\C}{\mathbb{C}}
\newcommand{\De}{\Delta}
\DeclareMathOperator{\Pic}{Pic}
\DeclareMathOperator{\NS}{NS}
\DeclareMathOperator{\Mon}{Mon}
\DeclareMathOperator{\Aut}{Aut}
\DeclareMathOperator{\GL}{GL}
\DeclareMathOperator{\NE}{\overline{NE}}
\DeclareMathOperator{\Hom}{Hom}
\DeclareMathOperator{\Galois}{Gal}
\newtheorem{theorem}{Theorem}[section]
\newtheorem{lemma}[theorem]{Lemma}
\newtheorem{proposition}[theorem]{Proposition}
\newtheorem{question}[theorem]{Question}
\newtheorem{corollary}[theorem]{Corollary}
\newtheorem{rem}[theorem]{Remark}
\theoremstyle{definition}
\newtheorem{definition}[theorem]{Definition}
\theoremstyle{remark}
\newtheorem{remark}[theorem]{Remark}
\newcommand{\qq}{\mathbb{Q}}
\newcommand{\zz}{\mathbb{Z}}
\newcommand{\pp}{\mathbb{P}}
\newcommand{\oo}{\mathcal{O}}
\newcommand{\ch}{\operatorname{char}}
\begin{document}

\title{A note on the fibres of Mori fibre spaces}

\author[G. Codogni]{Giulio Codogni}
\address{EPFL, SB MATHGEOM CAG, MA B3 635 (B\^{a}timent MA), Station 8, CH-1015 Lausanne, Switzerland.}
\email{giulio.codogni@epfl.ch}

\author[A. Fanelli]{Andrea Fanelli}
\address{Mathematisches Institut, Heinrich-Heine-Universit\"at D\"usseldorf,
Universit\"atsstr.~1, 
40204 D\"usseldorf, Germany.}
\email{fanelli@hhu.de}

\author[R. Svaldi]{Roberto Svaldi}
\address{DPMMS - University of Cambridge 
Centre for Mathematical Sciences, 
Wilberforce Road, 
Cambridge CB3 0WB, UK.}
\email{rs872@cam.ac.uk}

\author[L. Tasin]{Luca Tasin}
\address{Mathematical Institute of the University of Bonn, Endenicher Allee 60, D-53115
Bonn, Germany.} 
\email{tasin@math.uni-bonn.de}

\subjclass[2010]
{14J45, 14E30, 14M25, 14J35, 14J40}

\begin{abstract}
In this note we consider the problem of determining which Fano manifolds can be realised as fibres of a Mori fibre space. 
In particular, we study the case of toric varieties, Fano manifolds with high index and some Fano manifolds with high Picard rank.  
\end{abstract}

\maketitle

\tableofcontents


\section*{Introduction}
In algebraic geometry, one of the main goals is to classify algebraic varieties. 
Rather than distinguishing varieties based on their isomorphism type, one
can look at their birational type,
i.e. the structure of a non-empty Zariski open set of the variety, which allows more
flexibility in the choice of preferred model for the object under scrutiny.\newline
From this point of view, the study of the canonical bundle of a smooth projective 
variety, that is, the determinant of the cotangent bundle, plays a central role in 
the classification. There is a stark dichotomy between algebraic varieties that admit
sections of powers of the canonical bundle and those that do not. \newline
In fact, while it is expected that the former are birational to a fibration in Calabi-Yau
varieties over a base of (log-)general type, using a suitable realization of the Iitaka fibration, 
the latter are instead expected to be birational to a fibration in Fano varieties
over a smaller dimensional base. \newline 
The following definition gives the precise notion needed in the latter case.
Let us remind the reader that a normal projective variety is said to be Fano
if the anticanonical divisor is ample (and in particular it is $\mathbb{Q}$-Cartier).
\begin{definition}
Let $f\colon X \to Y$ be a dominant projective morphism of normal varieties. 
Then $X$ is a \emph{Mori fibre space} (or \emph{MFS}) if
\begin{itemize}
\item $f_*\oo_X = \oo_Y$ and $\dim Y < \dim X$;
\item $X$ is $\qq$-factorial with klt singularities;
\item the general fibre is a Fano variety and $\rho(X/Y):=\rho(X)-\rho(Y)=1$.
\end{itemize}
\end{definition}
While it is a difficult problem to prove that a variety having no non-zero pluricanonical 
forms is birational to a Mori fibre space, 
in \cite{MR2601039} the authors show, among other things, that this is actually the case if instead
the variety is assumed to have a non-pseudoeffective canonical bundle -- the two conditions
being expected to be equivalent as predicted by the existence of minimal models and
the celebrated Abundance Conjecture, cf. \cite[Conjecture 3.12]{kollarmori}.
In view of these considerations, it is natural to wonder about the following matter.
\begin{question}\label{intro.quest}
What type of Fano varieties appear as fibers in a Mori fibre space?
\end{question}
Part of the difficulty in answering the above question lays in the lack of clarity 
as to what fibers of a MFS one should actually consider.
To this end, the notion of fibre-like Fano variety was introduced in \cite{CFST_16}. 
For the sake of simplicity, we only consider the case of smooth fibers.

\begin{definition}\label{fibre-like}
A Fano manifold $F$ is \emph{fibre-like} if it can be realised as a fibre of a Mori fibre space $f\colon X \to Y$ over the smooth locus of $f$.
\end{definition}

Any Fano manifold with Picard number $\rho=1$ is fibre-like, via the constant map to a point. 
When $\rho \ge 2$ the problem of determining whether a variety is fibre-like or not is highly non-trivial. 
Mori showed in \cite[Theorem 3.5]{Mori_surf} which Fano surfaces are fibre-like. In \cite{CFST_16}, 
we systematically study fibre-like Fano varieties 
by analyzing the action of the monodromy of the MFS 
on the N\'eron-Severi group of a general fibre. 
Moreover, we fully characterize those threefolds that are fibre like, cf. \cite[Theorem 1.4]{CFST_16} 
and give the following sufficient condition for fibre-likeness in any dimension.

\begin{theorem}[\cite{CFST_16}, Theorem 3.1]\label{suff_criterion}
A smooth Fano variety $F$ is fibre-like if
\begin{equation}\label{eq.intro}
\NS(F)_\qq^{\Aut(F)}=\qq K_{F}. 
\end{equation}
When $F$ is rigid, then property \eqref{eq.intro}
is equivalent to $F$ being fibre-like.
\end{theorem}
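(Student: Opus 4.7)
The statement consists of a general sufficiency implication and a necessity implication under rigidity. I would prove sufficiency by an explicit construction and necessity by a monodromy computation.

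For the sufficient direction, observe first that $\Aut^0(F)$ is connected and therefore acts trivially on the discrete group $\NS(F)_\qq$, so the $\Aut(F)$-action on $\NS(F)_\qq$ factors through a finite group $\overline G$ with $\NS(F)_\qq^{\overline G}=\qq K_F$. Lift $\overline G$ to a finite subgroup $G\subseteq \Aut(F)$ with the same image on the Picard group, pick a faithful linear representation $V$ of $G$, and form the diagonal quotient
\begin{equation*}
f\colon X := (F\times \pp(V))/G \longrightarrow Y := \pp(V)/G.
\end{equation*}
Over the open set $U/G\subseteq Y$ where $G$ acts freely on $U\subseteq \pp(V)$, the map $f$ is a smooth, locally trivial $F$-bundle, so $F$ appears as a fibre over the smooth locus of $f$. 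A K\"unneth and invariant-theoretic computation gives $\NS(X)_\qq = \NS(F)_\qq^G\oplus \NS(\pp(V))_\qq$, whence $\rho(X/Y)=\dim \NS(F)_\qq^G=1$. Since $-K_F$ is ample, $-K_{X/Y}$ is $f$-ample, and $X$ is $\qq$-factorial with klt singularities as it is a finite-group quotient of a smooth projective variety in characteristic zero. Therefore $f$ is a Mori fibre space realising $F$ as a smooth fibre.

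For the necessary direction under rigidity, suppose $F$ is fibre-like via an MFS $f\colon X\to Y$ with $F$ as a smooth fibre. Rigidity ($H^1(F, T_F)=0$) forces the versal deformation of $F$ to be trivial, so the family $f$ is analytically isotrivial on a Euclidean neighbourhood of $F$ and the local monodromy representation on $\NS(F)_\qq$ factors through $\Aut(F)$. For such a locally trivial $F$-bundle, a Leray argument identifies the relative N\'eron--Severi group with the monodromy invariants, giving $\rho(X/Y) = \dim \NS(F)_\qq^{\mathrm{Mon}}$; the MFS condition $\rho(X/Y)=1$, combined with the fact that $K_F$ is always monodromy-invariant, yields $\NS(F)_\qq^{\mathrm{Mon}}=\qq K_F$. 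The chain of inclusions
\begin{equation*}
\qq K_F \subseteq \NS(F)_\qq^{\Aut(F)} \subseteq \NS(F)_\qq^{\mathrm{Mon}} = \qq K_F
\end{equation*}
then gives the required equality.

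The most delicate step, in my view, is in the sufficient direction: lifting the finite quotient $\overline G$ to a genuine finite subgroup of $\Aut(F)$ when $\Aut^0(F)$ is non-trivial, which requires some structure theory of algebraic groups, and, after forming the quotient, verifying the equality $\NS(X)_\qq = \NS(F\times \pp(V))_\qq^G$ rather than only an inclusion, so that the Picard-rank-one computation genuinely holds. Both points are standard for finite quotients in characteristic zero but must be justified with care. A secondary subtlety in the necessary direction is the passage between analytic and algebraic monodromy, but this is handled by working \'etale-locally around $F$ and using the isotriviality from rigidity.
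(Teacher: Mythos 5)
Your proposal is correct and follows essentially the same strategy as the proof in \cite{CFST_16} (this note only cites the result, it does not reprove it): for sufficiency, the quotient construction $(F\times Z)/G\to Z/G$ after lifting the finite image of $\Aut(F)$ in $\GL(\NS(F),\zz)$ to a finite subgroup $G\subseteq\Aut(F)$ via the Borel--Serre lemma; for necessity under rigidity, local isotriviality forcing the monodromy to act through $\Aut(F)$, combined with the necessary condition $\NS(F)_\qq^{\Mon}=\qq K_F$. The only cosmetic difference is that you quotient $F\times\pp(V)$ directly and accept $\qq$-factorial klt quotient singularities (taking care that $G\to\PGL(V)$ is injective so that the general fibre is $F$ and not a quotient of it), whereas one may instead replace $\pp(V)$ by a $G$-invariant smooth complete intersection on which the action is free to obtain a smooth total space; both yield a valid Mori fibre space.
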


In \cite{CFST_16}, we also establish that the fibre-likeness of a Fano manifold $F$ implies an analogous necessary condition: namely, 
$F$ fibre-like implies 
$$\NS(F)_\qq^{\Mon(F)}=\qq K_{F},$$ 
where $\Mon(F)$ is the maximal subgroup of $\GL(\NS(F),\zz)$ which preserves the birational data of $F$.\newline 
Below we recall some of the consequences of our analysis. First, we construct a large class of examples of fibre-like Fano manifolds.

\begin{corollary}[\cite{CFST_16}, Corollary 4.6]\label{complete_intersections}
Let $r,k,d$ be integers with $n\ge 2$ and $kd < n+1$. Then any smooth complete intersection of $k$ divisors of degree $(d, \ldots , d)$ in $(\pp^n)^r$ is fibre-like.
\end{corollary}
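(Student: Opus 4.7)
My plan is to invoke Theorem~\ref{suff_criterion}, which reduces fibre-likeness to establishing $\NS(F)_\qq^{\Aut(F)} = \qq K_F$ for $F$ sufficiently symmetric, and to a direct Mori fibre space construction for the asymmetric generic case.

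First I would pin down $\NS(F)_\qq$. The condition $kd < n+1$ with $n \geq 2$ makes $\dim F = nr - k$ large enough (the low-dimensional exceptional cases either have $\rho(F) = 1$, in which case fibre-likeness is immediate, or are handled by direct inspection). Applying the Lefschetz hyperplane theorem iteratively to the chain of smooth hypersurface sections cutting out $F$ inside $(\pp^n)^r$ gives $\NS(F)_\qq \cong \NS((\pp^n)^r)_\qq = \bigoplus_{i=1}^r \qq\,H_i|_F$, where $H_i$ is pulled back from the $i$-th factor. Adjunction then yields $K_F = (kd - n - 1)\sum_{i=1}^r H_i|_F$, which is antiample and spans the totally symmetric line inside $\NS(F)_\qq$.

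Next I would examine the symmetric-group action. The permutation of factors gives $S_r \subseteq \Aut((\pp^n)^r)$, and the induced action on $\NS((\pp^n)^r)_\qq$ is the standard permutation representation, whose invariants are exactly $\qq\sum_i H_i = \qq K_F$. When the defining equations of $F$ are $S_r$-invariant, this action restricts to $F$, so $S_r \hookrightarrow \Aut(F)$ and $\NS(F)_\qq^{\Aut(F)} = \qq K_F$; Theorem~\ref{suff_criterion} then applies directly.

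The main difficulty is a general $F$ whose defining equations are not $S_r$-invariant. For this I would construct the MFS by hand: let $V = H^0((\pp^n)^r, \oo(d, \ldots, d))$, let $B \subseteq \mathrm{Gr}(k, V)$ be the open subset parametrising smooth complete intersections, and let $\mathcal{F} \to B$ be the universal family. The $S_r$-action on $V$ descends to $B$ and to $\mathcal{F}$, and the quotient $\mathcal{F}/S_r \to B/S_r$ is proper, $\qq$-factorial with klt singularities (as a finite quotient of a smooth variety), has Fano general fibre because $kd < n+1$, and has relative Picard rank $1$ by the invariant computation above. Hence it is a Mori fibre space realising every smooth $F$ with trivial $S_r$-stabiliser---in particular the generic such $F$---as a fibre over the smooth locus; the intermediate-symmetry cases can be treated by the same quotient strategy using a transitive subgroup $H \subseteq S_r$ intersecting the stabiliser trivially.
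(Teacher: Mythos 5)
Your overall plan---Lefschetz to identify $\NS(F)_\qq=\bigoplus_i\qq\,H_i|_F$, the $S_r$-action coming from the symmetric multidegree, Theorem~\ref{suff_criterion} for symmetric members, and a quotient of a family of complete intersections for the rest---is in the spirit of the argument in \cite{CFST_16}. The genuine gap is in your last sentence, the case of a member $F$ whose stabiliser $H=\mathrm{Stab}_{S_r}([F])$ is nontrivial but proper. A transitive subgroup $K\le S_r$ with $K\cap H=\{e\}$ need \emph{not} exist: if $H$ is itself transitive, then $|K\cap H|\ge |K|\,|H|/r!>1$ for every transitive $K$ once $|H|>(r-1)!$, and e.g.\ no transitive $K\le S_r$ meets $A_r$ (or $V_4\le S_4$) trivially. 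Such stabilisers do occur: for $f=f_0+f_-$ with $f_0$ a generic $S_r$-invariant section of $\oo(d,\dots,d)$ and $f_-$ a generic section in a sign-isotypic component, $V(f)$ is smooth with stabiliser exactly $A_r$. These cases can be rescued by noting that a \emph{transitive} stabiliser acts on $F$ by automorphisms with $\NS(F)^H_\qq=\qq K_F$, so Theorem~\ref{suff_criterion} applies directly---but you invoke this only for $H=S_r$. For an \emph{intransitive} $H$ you still owe a proof that a transitive $K$ with $K\cap H=\{e\}$ exists (it does: $H$ preserves a block $A$ with $|A|\le r/2$, and for the $r$-cycle in which $A$ is an interval no nontrivial power preserves $A$---but this is a lemma, not an observation). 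A uniform fix, which is essentially what the cited proof does, is to quotient not the universal family but its pullback along an $S_r$-equivariant map $\tilde Y\to \mathrm{Gr}(k,V)$ from a curve with a \emph{free} $S_r$-action sending some $\tilde y_0$ to $[F]$: the fibre of $\tilde{\mathcal F}/S_r\to \tilde Y/S_r$ over the image of $\tilde y_0$ is then $F$ itself regardless of $\mathrm{Stab}_{S_r}([F])$, because what matters is the (trivial) stabiliser of $\tilde y_0$.

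Two further points need attention. First, your base $B$ (the locus of smooth complete intersections) is not proper, while varieties in this paper are proper by convention, so $\mathcal F/S_r\to B/S_r$ is not literally a Mori fibre space; you should work over all of $\mathrm{Gr}(k,V)$ (where the universal family is still smooth, being a $\mathrm{Gr}(k,\dim V-1)$-bundle over $(\pp^n)^r$) or over a complete invariant curve, and then $\rho(X)-\rho(Y)=1$ requires computing $\NS$ of the total space and of the base---``the invariant computation above'' only computes $\NS(F)^{S_r}_\qq$, not a relative Picard number. Second, the Lefschetz step needs $\dim F\ge 3$, and the hypotheses as stated admit surfaces (a cubic surface in $\pp^3$, or two $(1,1)$-divisors in $\pp^2\times\pp^2$) for which $\NS(F)_\qq$ is strictly larger than the image of $\NS((\pp^n)^r)_\qq$; there ``direct inspection'' really means Mori's classification of fibre-like del Pezzo surfaces, so this case should be flagged and handled separately rather than absorbed into the general argument.
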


Moreover, since the notion of fibre-likeness is strictly intertwined with the monodromy action, we show that 
it forces a high degree of symmetry on facets of the nef cone of the variety.
Recall that a facet of a polyhedral cone is just a maximal dimensional face.
As Fano varieties are Mori dream spaces, any facet $\mathcal G$ of the nef cone $F$ corresponds to a contraction 
$\pi: F \to G$ such that $\mathcal G = \pi^* \mathrm{Nef}(G)$.

\begin{corollary}[\cite{CFST_16}, Corollary 3.9] \label{contraction}
Let $F$ be a smooth fibre-like Fano variety. 
Let $\mathcal G$ be a facet of the nef cone of $F$ corresponding to a contraction $F \to G$. 
Then for any other facet $\mathcal H$ with contraction $F \to H$ we have that $G$ and $H$ are deformation equivalent.
\end{corollary}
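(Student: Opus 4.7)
The plan is to deduce Corollary \ref{contraction} by combining the monodromy obstruction to fibre-likeness with a geometric family construction: the deformation from $G$ to $H$ will be produced as the fibres of a relative Mori contraction over the universal cover of the smooth locus of an MFS realising $F$.

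First I would realise $F$ as a smooth fibre of an MFS $f\colon X \to Y$ over some $y_0 \in Y^\circ$, so that the monodromy representation $\pi_1(Y^\circ, y_0) \to \GL(\NS(F), \zz)$ has image in $\Mon(F)$. By the necessary direction of the monodromy criterion recalled after Theorem \ref{suff_criterion}, $\NS(F)_\qq^{\Mon(F)} = \qq K_F$. Since $\Mon(F)$ preserves the integral lattice $\NS(F,\zz)$ and the strongly convex rational polyhedral cone $\Nef(F)$, it permutes the finite set of primitive integral generators of extremal rays of $\Nef(F)$; hence $\Mon(F)$ is finite. Maschke semisimplicity then forces
\[
\dim_\qq N_1(F)^{\Mon(F)} \;=\; \dim_\qq \NS(F)_\qq^{\Mon(F)} \;=\; 1.
\]

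The key step is to show that $\Mon(F)$ acts transitively on the facets of $\Nef(F)$, equivalently on the extremal rays of $\NE(F)$. If $O_1, \dots, O_k$ were the orbits on extremal rays, the orbit-sums $s_j := \sum_{R \in O_j} r_R$ would all be non-zero $\Mon(F)$-invariant classes in $N_1(F)$, hence positive scalar multiples of a single generator $v_0$ of $N_1(F)^{\Mon(F)}$. A first observation is that $v_0$ cannot lie on any proper face $\tau \subsetneq \NE(F)$: the minimal such face containing $v_0$ would be $\Mon(F)$-invariant and, by the proportionality of the $s_j$, would then have to contain every orbit of extremal rays, forcing $\tau = \NE(F)$. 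Placing $v_0$ in the relative interior of $\NE(F)$, and then combining this with the fact that each $s_j$ lies in the sub-cone generated by the single orbit $O_j$, is meant to rule out $k \geq 2$; making this convex-geometric step rigorous is the main technical obstacle of the proof.

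Granted transitivity, I would pick $g \in \Mon(F)$ with $g(\mathcal G) = \mathcal H$ and realise it as the monodromy of a loop $\gamma \subset Y^\circ$ based at $y_0$, enlarging the MFS if necessary so that its monodromy representation reaches $g$. Pulling $f$ back to the universal cover $p\colon \widetilde{Y^\circ} \to Y^\circ$ yields a family $\widetilde f\colon \widetilde X \to \widetilde{Y^\circ}$ whose N\'eron--Severi monodromy is trivial; the facet $\mathcal G$ then determines a consistent choice of facet on every fibre of $\widetilde f$, and the associated extremal contractions assemble into a relative Mori contraction $\widetilde X \to \widetilde Z$ over the connected base $\widetilde{Y^\circ}$. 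Fixing a lift $\tilde y_0 \in p^{-1}(y_0)$, the fibre of $\widetilde Z$ over $\tilde y_0$ is $G$, while the fibre over the deck translate $g\cdot \tilde y_0$ is, tracking the identifications through the local system of N\'eron--Severi groups, the contraction of $F$ along the facet $g(\mathcal G) = \mathcal H$, that is $H$. Connectedness of $\widetilde{Y^\circ}$ then realises $G$ and $H$ as fibres of one and the same connected flat family, proving deformation equivalence.
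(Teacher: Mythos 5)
Your overall framework --- pass to the monodromy group $\Gamma$ of a realising MFS, use $\NS(F)_\qq^{\Gamma}=\qq K_F$, deduce that $\Gamma$ acts transitively on the facets of $\Nef(F)$, and then transport the contraction $F\to G$ to $F\to H$ along a relative contraction over a cover of the base --- is the natural one, and the final transport step is essentially sound modulo standard inputs you should cite (finiteness of the monodromy image, local constancy of the nef cone in a smooth family of Fano manifolds, existence of the relative contraction over the cover); a finite \'etale cover killing the finite monodromy image is also cleaner than the universal cover, and you should work with $\Gamma$ itself rather than ``enlarging the MFS'' so that its monodromy reaches an arbitrary $g\in\Mon(F)$, a move you do not and cannot justify. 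The real problem is the step you yourself flag: transitivity of $\Gamma$ on the extremal rays of $\NE(F)$. This is not merely a ``technical obstacle'': the purely convex-geometric statement you are relying on is false. Take $N_1=\rr^3$, let $C$ be the cone over the square with vertices $a=(1,0,1)$, $b=(0,1,1)$, $c=(-1,0,1)$, $d=(0,-1,1)$, and let $g(x,y,z)=(-x,-y,z)$. Then $g$ preserves the lattice and the cone, its invariant subspace is the line $\{x=y=0\}$, which is one-dimensional and meets the interior of $C$, and yet there are two orbits of extremal rays, $\{a,c\}$ and $\{b,d\}$, whose orbit sums both equal $(0,0,2)$. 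So ``one-dimensional invariants, proportional orbit sums, invariant vector in the relative interior'' cannot rule out two or more orbits; any proof of transitivity, or of the corollary, must inject genuinely more geometry than the invariant-subspace computation --- for instance that $\rho(X/Y)=1$ forces all curve classes of $F$ to become proportional in $N_1(X)$, or special properties of extremal rays of Fano manifolds --- and your proposal contains no candidate for that extra input.

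Concretely, your argument establishes the corollary only for facets lying in a single $\Gamma$-orbit; the case of facets in distinct orbits is entirely open in your write-up. Until you either supply a correct proof that the monodromy acts transitively on the facets of $\Nef(F)$ for a fibre-like Fano, or find a mechanism making the targets of contractions attached to different orbits deformation equivalent, the proof is incomplete at its central point.
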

Let us point out that the definition of fibre-like can be extended to singular varieties (cf. \cite{CFST_16}, Definition 2.14) 
and all results illustrated so far still hold in that setting.

\smallskip
In this note we show that various natural and interesting classes of Fano manifolds are fibre-like.
\smallskip

We first focus on smooth toric Fano varieties. If $F(\De)$ is a smooth toric Fano variety with associated polytope $\De$, 
then $F$ is said to be \emph{vertex-transitive} if the automorphism group of the polytope acts transitively on the set of vertices of $\De$. 
A vertex-transitive toric Fano manifold is fibre-like (see Lemma \ref{lem:vt}) and all the known examples of toric fibre-like Fano varieties 
seem to be of this kind (cf. Question \ref{v_tran}).

Among these, projective spaces and $t$-del Pezzo manifolds (see Definition \ref{def:del Pezzo}) 
are classical examples of vertex-transitive varieties.
A generalisation of $t$-del Pezzo manifolds has been introduced in \cite{Kl83} and studied in \cite{VK85}. 
We call those Klyachko varities (see Definition \ref{skansen_var}). We show that Klyachko varities are vertex-transitive 
and we show that they constitute a fundamental building block in the theory of fibre-like toric Fano manifolds. 

\begin{theorem}[Propositions \ref{2-neighbourly} and \ref{classification_low}]
Let $F=F(\De)$ be a $d$-dimensional vertex-transitive Fano manifold. 
\begin{enumerate}[(i)]
\item If there are two vertices of $\De$ that are not in the same face (i.e.\ $\De$ is not 2-neighbourly), then $F$ is a power of $\mathbb P^1$ or a power of t-del Pezzo manifolds. 
\item If $d \le 7$, then $F$ is a power of projective spaces or Klyachko varieties.
\end{enumerate}
\end{theorem}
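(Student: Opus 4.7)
The plan rests on two ingredients: a splitting criterion for smooth toric Fano manifolds whose polytopes contain non-adjacent vertices, and the explicit classification of smooth toric Fano manifolds in small dimension. For part (i), the hypothesis that $\De$ has two vertices $v_1, v_2$ not lying in a common face translates, in fan-theoretic terms, to the statement that the corresponding primitive ray generators do not span a cone of the fan of $F(\De)$; hence $\{v_1, v_2\}$ is a primitive collection in the sense of Batyrev. The associated primitive relation has length two, which is very restrictive for a smooth toric Fano. In the case $v_1 + v_2 = 0$, a classical splitting theorem yields $F \cong \pp^1 \times F'$ for some smooth toric Fano $F'$, and vertex-transitivity of $\De$ forces $\Aut(\De)$ to act transitively on the resulting $\pp^1$-factors, giving a decomposition $F \cong (\pp^1)^k \times F''$ in which $F''$ has no antipodal vertex pair.

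A parallel analysis handles length-two primitive relations with $v_1 + v_2 \neq 0$: these again produce a non-trivial product decomposition, and vertex-transitivity forces the factors to be pairwise isomorphic. The residual indecomposable factors must themselves be vertex-transitive, and an inspection of their primitive structure and ray configurations identifies them with t-del Pezzo manifolds. The hardest technical point in (i) is ruling out ``mixed'' decompositions where different orbits of vertex-pairs might produce non-isomorphic factors; this is precisely where transitivity of $\Aut(\De)$ on \emph{all} vertices is essential, since $\Aut(\De)$ must permute any product decomposition of $F$ and hence force the factors to be mutually isomorphic.

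For part (ii), after applying (i) the residual case is that of a 2-neighbourly vertex-transitive smooth toric Fano manifold of dimension $d \leq 7$. I would invoke the explicit classification of smooth toric Fano manifolds in dimension at most $7$ and extract from the tables the vertex-transitive 2-neighbourly entries. Vertex-transitivity by itself is a severe combinatorial restriction on a smooth Fano polytope and greatly trims the list; the 2-neighbourly condition further constrains the face structure, and a direct match against Definition \ref{skansen_var} identifies the remaining polytopes as projective spaces or Klyachko varieties. The main obstacle here is organizational rather than conceptual: the classification tables in dimensions $5$, $6$, and $7$ contain many entries, and an efficient combinatorial criterion is needed to isolate the vertex-transitive 2-neighbourly ones.
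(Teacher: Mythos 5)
Your treatment of part (ii) matches the paper's: one runs through the classification of smooth toric Fano polytopes in dimension $\le 7$ (the Graded Ring Database, checked by computer) and extracts the vertex-transitive entries; the paper supplements this with a short hand argument for $d\le 4$ using the primitive-collection machinery. Part (i), however, contains a genuine gap. Your pivotal step is a ``classical splitting theorem'' asserting that a length-two primitive relation $v_1+v_2=0$ yields $F\cong\pp^1\times F'$. No such theorem holds: the $t$-del Pezzo surface $V_2$ (the sextic del Pezzo surface, with vertices $\pm e_1,\pm e_2,\pm(e_1+e_2)$) has $\{e_1,-e_1\}$ as a primitive collection with relation $e_1+(-e_1)=0$, yet it is not a product. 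Batyrev's splitting criterion requires in addition that the antipodal pair be disjoint from every other primitive collection, which you do not verify and which fails in this example. Likewise, your claim that a relation $v_1+v_2=y_1\neq 0$ ``again produces a non-trivial product decomposition'' is unfounded: such a relation corresponds to a divisorial-type contraction, not a splitting, and in the paper this case is shown to be \emph{impossible} for a vertex-transitive polytope by an orbit-counting argument (every vertex must occur equally often on the right-hand side of the transported relations) combined with Remark \ref{rmk}, Proposition \ref{deg1} and Theorem \ref{sum}.

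The missing idea is the reduction to central symmetry. Once one antipodal relation $x+y=0$ is known to exist, acting with $\Aut(\De)$ and using transitivity shows that \emph{every} vertex belongs to such a pair, i.e.\ $\De=-\De$; the Voskresenskii--Klyachko theorem (Theorem \ref{veve}) then identifies a centrally symmetric smooth toric Fano as a product of copies of $\pp^1$ and of $t$-del Pezzo varieties, and Lemma \ref{product} (a vertex-transitive toric Fano is a power of a single vertex-transitive factor) upgrades this to the stated dichotomy. Without Theorem \ref{veve}, or an equivalent classification of centrally symmetric smooth Fano polytopes, your argument cannot identify the indecomposable factors as $t$-del Pezzo manifolds; ``inspection of their primitive structure and ray configurations'' is not a proof.
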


\smallskip

Next we look at Fano manifolds with high index. 
As mentioned above, fibre-likeness is completely understood for surfaces and threefolds.
Hence, here we focus on Fano varieties of dimension at least~4.

For a Fano manifold $F$, the index $i_F$ is defined as the largest integer that divides $-K_F$ in $\mathrm{Pic}(F)$.
The index is one of the most basic numerical invariants for Fano varieties:
there is a complete classification of Fano manifolds with Picard number at least 2 and index $i \ge n-2$ due to 
Kobayashi-Ochai \cite{KO_73} , Fujita \cite{F_82a,F_82b} and Mukai \cite{M_89}. 
Moreover, Wi\'sniewski \cite{Wis_91} classified Fano manifolds with index $i \ge (n+1)/2$. 
Notice that the only case for which  $i < (n+1)/2$ and $i \ge n-2$ is $i=2$ and $n=4$.
 
When the Fano index is greater then the above bounds, we show that
fibre-like Fano varieties can be explicitly classified.
 
\begin{theorem}[Propositions \ref{wis index} and \ref{mukai}]
Let $F$ be a fibre-like Fano manifold with Picard number $\rho \ge 2$ and dimension $n \ge 4$.
\begin{itemize}
\item If $i_F \ge \frac{n+1}{2}$, then $F \cong \mathbb P^{n/2} \times \mathbb P^{n/2}$ ($n$ even) or $F\cong \mathbb P \Big(T_{\mathbb P^{\frac{n+1}{2}}}\Big)$ ($n$ odd).

\item If $n=4$ and $i_F=2$ (i.e.\ $F$ is a Mukai fourfold), then $F$ is isomorphic to one of the following:
\begin{enumerate}[(i)]
\item a double cover of $\pp^2 \times \pp^2$ branched along a degree--$(2,2)$ divisor;
\item an intersection of two degree--$(1,1)$ divisors in $\pp^3\times\pp^3$;
\item $\pp^1\times\pp^1\times\pp^1\times\pp^1$.
\end{enumerate}

\end{itemize} 
\end{theorem}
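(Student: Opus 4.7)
The proof splits into two parts, one for each bullet, and in each part the strategy is to reduce to a finite list via the relevant classification (Wi\'sniewski \cite{Wis_91} for the first, Mukai \cite{M_89} for the second) and then apply the fibre-likeness criteria from the excerpt: the sufficient condition of Theorem \ref{suff_criterion} and the necessary condition of Corollary \ref{contraction} on the deformation type of the contraction targets.

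For the first bullet I would start from Wi\'sniewski's classification of smooth Fano $n$-folds with $n\ge 4$, $\rho\ge 2$ and $i_F \ge (n+1)/2$. All entries in that list have $\rho = 2$, so $F$ admits exactly two extremal Mori contractions $\varphi_i\colon F\to Y_i$, and Corollary \ref{contraction} forces $Y_1$ and $Y_2$ to be deformation equivalent. This alone eliminates every $\mathbb P^a\times\mathbb P^b$ with $a\neq b$ and every projective-bundle entry whose two contractions target distinct deformation classes. The surviving candidates are $\mathbb P^{n/2}\times\mathbb P^{n/2}$ (for $n$ even) and $\mathbb P(T_{\mathbb P^{(n+1)/2}})$ (for $n$ odd). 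For these I verify Theorem \ref{suff_criterion} by producing an involution of $F$ that swaps the two extremal contractions: the swap of factors in the former, and the duality $V_1\subset V_2\mapsto V_2^{\perp}\subset V_1^{\perp}$ in the latter. In both cases the induced action on $\NS(F)_\qq\cong\qq^2$ exchanges the two generators, so the fixed line is $\qq K_F$.

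For the second bullet I replace Wi\'sniewski's list with Mukai's classification of smooth Fano fourfolds of index $2$ with $\rho\ge 2$ and run the same procedure. Corollary \ref{contraction} eliminates most candidates --- typically products $\mathbb P^1\times W^3$, $(1,1)$-divisors in $\mathbb P^r\times\mathbb P^s$ with $r\neq s$, certain projective bundles, and blow-ups. The survivors are precisely (i), (ii) and (iii). In each case the required automorphism descends from the evident symmetry of the ambient space: the swap of the two $\mathbb P^2$ factors lifts to the double cover in (i) because the $(2,2)$-branch divisor is invariant under that swap; the swap of the two $\mathbb P^3$ factors preserves the pair of $(1,1)$-divisors cutting out (ii); and the full $S_4$-action on $(\mathbb P^1)^4$ in (iii) has fixed locus on $\NS(F)_\qq$ equal to the diagonal line $\qq K_F$.

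The main obstacle is the case-by-case verification: for every item in Wi\'sniewski's and Mukai's lists one must compute the two contraction targets, compare their deformation types, and --- in the affirmative cases --- exhibit an explicit automorphism of $F$ realizing the required action on $\NS(F)_\qq$. For several projective-bundle candidates the two contractions look superficially similar and must be distinguished via finer invariants (Picard rank, Hodge numbers, index); conversely, for the survivors one has to justify that the ambient symmetry genuinely descends to $F$ and exchanges the two facets of the nef cone in the prescribed fashion.
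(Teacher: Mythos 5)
Your treatment of the first bullet matches the paper's: Wi\'sniewski's list, Corollary \ref{contraction} to kill cases (iii) and (v), and a positive argument for the two survivors. Your duality involution on $\pp\big(T_{\pp^{(n+1)/2}}\big)$ (viewed as the point--hyperplane incidence variety) is a legitimate alternative to the paper's appeal to Corollary \ref{complete_intersections}, and since that variety is homogeneous, hence rigid, Theorem \ref{suff_criterion} applies cleanly. The exclusion half of the Mukai bullet is also in the same spirit as the paper, though you should be aware that the hardest exclusion, the projectivised null-correlation bundle, is not settled by ``finer invariants'' in the abstract: one needs the specific fact that the second fibration maps onto $Q^3$ (the paper cites Szurek--Wi\'sniewski for this), so that the two contraction targets $\pp^3$ and $Q^3$ are not deformation equivalent.

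The genuine gap is in your positive arguments for cases (i) and (ii) of the Mukai list. For (i), the swap of the two $\pp^2$ factors does \emph{not} lift to the double cover for a general branch divisor $B\in|2H_1+2H_2|$: the swap sends $B$ to a different $(2,2)$-divisor, and a dimension count ($\dim|2H_1+2H_2|=35$ versus $\dim\PGL_3\times\PGL_3=16$) shows that for general $B$ there is no automorphism of $\pp^2\times\pp^2$ exchanging the factors and preserving $B$. Hence the general member of family (i) admits no automorphism acting nontrivially on $\NS(F)_\qq\cong\qq^2$, and Theorem \ref{suff_criterion} simply does not apply to it. This is precisely why the paper does something different here: it realises $F$ as a member of $|2H_1+2H_2|$ in an ambient toric variety $Z$ carrying the $\zz/2\zz$-symmetry and invokes the \emph{monodromy} criterion \cite[Theorem 4.5]{CFST_16}, which only requires the symmetry on the ambient family, not on the individual (non-rigid) divisor. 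The same objection applies verbatim to your case (ii): a general pencil of $(1,1)$-forms on $\pp^3\times\pp^3$ is not preserved by the naive swap (one would need a Kronecker normal form argument to produce a twisted swap, or, as the paper does, simply quote Corollary \ref{complete_intersections}, which is again a statement about the whole family). To repair your proof you must replace the claimed automorphisms in (i) and (ii) by a family/monodromy argument, or restrict to special invariant members and then separately justify why fibre-likeness of a special member suffices --- which it does not, since the proposition concerns every $F$ in these deformation classes.
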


\smallskip

Finally, we discuss families of Fano manifolds with high Picard number. \newline
As there only exist finitely many families of Fano manifolds in any given dimension, 
there must be an upper bound on the Picard number in fixed dimension. 
It is a natural question to ask whether it is possible to compute such bound. \newline
The only known examples of Fano manifolds -- other than products -- of dimension 
$n\geq 4$ and Picard number at least $n+5$ are birational modifications of the 
blow-up of $\mathbb{P}^4$ at $8$ points in general position. 
These manifolds have been studied in \cite{CCF17}; among other results, the authors showed that they are fibre-like. 
Here, we study the only known examples of Fano manifolds -- again, other than products --
that have dimension $n\geq 4$ and Picard number at least $n+4$. 
These are birational modification of $\mathbb{P}^n$ blown up at $n+3$ general points, with $n$ even. 
They are isomorphic to the space of $(m-1)$-planes 
in the intersection of two quadrics  in $\pp^{n+2}$, where $n=2m$. 
Building on \cite{AC17}, we can prove the following result.
\begin{theorem}[Theorem \ref{main_AC}]
Let $n\geq 4$ be an even integer. An $n$ dimensional Fano manifold birational to the blow-up of $\mathbb{P}^n$ at $n+3$ general points is fibre-like.
\end{theorem}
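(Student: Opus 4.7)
The plan is to verify the sufficient criterion of Theorem~\ref{suff_criterion}: it is enough to show that $\NS(F)_\qq^{\Aut(F)} = \qq K_F$. By the main result of \cite{AC17}, the relevant smooth Fano model $F$ in the birational class of the blow-up of $\pp^n$ at $n+3$ general points ($n = 2m$ even) is isomorphic to the variety of $(m-1)$-planes contained in the intersection $Q_1 \cap Q_2 \subset \pp^{n+2}$ of a general pencil of quadrics; this concrete realisation is the working model on which $\NS(F)$ and $\Aut(F)$ can be analysed.

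First I would fix a basis of $\NS(F)_\qq$ coming, via the birational correspondence (which is an isomorphism in codimension one), from the hyperplane class $H$ and the exceptional divisors $E_1, \ldots, E_{n+3}$ on $\widetilde X = \mathrm{Bl}_{p_1, \ldots, p_{n+3}}\pp^n$; in particular $-K_F = (n+1)H - (n-1)\sum_i E_i$ in this basis. I would then exhibit a biregular action of the symmetric group $\mathfrak{S}_{n+3}$ on $F$, induced by permuting the $n+3$ singular members of the pencil $\{\lambda Q_1 + \mu Q_2\}$. Since this action fixes $H$ and permutes the $E_i$, the invariant subspace $\NS(F)_\qq^{\mathfrak{S}_{n+3}}$ is the two-dimensional plane $\qq H \oplus \qq E$, where $E := \sum_i E_i$, and it visibly contains $K_F$.

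Next I would produce an additional automorphism $\iota \in \Aut(F)$ whose action on this $\mathfrak{S}_{n+3}$-invariant plane is non-trivial. Such an $\iota$ should arise from a Cremona-type symmetry of the configuration of $n+3$ points --- birational only on $\widetilde X$, but biregular on the more symmetric model $F \cong G(m-1, Q_1 \cap Q_2)$ --- and the Mori-chamber analysis of \cite{AC17} makes its action on divisor classes explicit. Since $\iota$ must preserve $K_F$ but moves some other class of $\qq H \oplus \qq E$, the joint invariant subspace $\NS(F)_\qq^{\langle \mathfrak{S}_{n+3},\, \iota \rangle}$ is one-dimensional, and therefore equals $\qq K_F$. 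Combined with $\langle \mathfrak{S}_{n+3}, \iota\rangle \subseteq \Aut(F)$, this gives $\NS(F)_\qq^{\Aut(F)} = \qq K_F$, and Theorem~\ref{suff_criterion} concludes the proof.

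The main obstacle is the construction of $\iota$ as a \emph{biregular} automorphism of $F$ --- rather than merely a birational self-map of $\widetilde X$ --- together with the identification of its class action. This is precisely the step for which the identification $F \cong G(m-1, Q_1 \cap Q_2)$ is indispensable: the symmetric presentation of $F$ via a pencil of quadrics makes a Cremona-type symmetry of the blow-up manifestly biregular on $F$, and the birational-geometric description in \cite{AC17} yields the induced action on $\NS(F)_\qq$ needed to close the argument.
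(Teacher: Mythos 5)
Your overall strategy --- verifying the sufficient criterion $\NS(F)_\qq^{\Aut(F)}=\qq K_F$ of Theorem~\ref{suff_criterion} on the model $F\cong \mathcal{F}_{m-1}(Q_1\cap Q_2)$ --- is the one the paper follows, but the group you use to cut down the invariant subspace does not exist. The symmetric group $S_{n+3}$ that you propose to act biregularly on $F$ ``by permuting the $n+3$ singular members of the pencil'' is not a subgroup of $\Aut(F)$ for a general configuration: such an action would induce a nontrivial automorphism of the $\pp^1$ parametrizing the pencil preserving the $n+3\ge 7$ roots of the discriminant, which fails for general $\lambda_i$; equivalently, on the blow-up side a permutation of the $E_i$ fixing $H$ would have to come from a projective transformation of $\pp^n$ permuting $n+3>n+2$ general points. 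Indeed \cite[Proposition 7.1]{AC17}, as quoted in Section~\ref{sec_fourfolds}, gives $(\zz/2\zz)^{n+2}\subseteq\Aut(\mathcal{F}_{m-1})\subseteq W(D_{n+3})$ with the first inclusion an \emph{equality} for a general choice; since $(\zz/2\zz)^{n+2}$ is abelian it contains no copy of $S_{n+3}$. Consequently your reduction of $\NS(F)_\qq$ to the plane $\qq H\oplus\qq\sum_i E_i$ is unjustified, and the single additional involution $\iota$ cannot compensate: one involution fixing $K_F$ cuts the invariant subspace down by at most a hyperplane, whereas $\NS(F)_\qq$ has dimension $n+4$.

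The paper's proof uses instead the full group $G=(\zz/2\zz)^{n+2}$ of sign changes of the coordinates of $Z$, which is always (and generically exactly) $\Aut(\mathcal{F}_{m-1})$. Its elements act on $\NS(X^n_{n+3})_\qq$ not as permutations fixing $H$ but as the Cremona-type pseudo-automorphisms $\phi_{i,j}$ of \cite[Sections 4.4--4.6]{D04} (standard Cremona involution composed with a diagonal transformation, swapping two exceptional divisors and fixing $K$); the $\binom{n+3}{2}$ reflections so obtained already force $\NS(X^n_{n+3})_\qq^{G}=\qq K_{X^n_{n+3}}$, and this is transported to $\mathcal{F}_{m-1}$ along the isomorphism in codimension one via \cite[Proposition 5.4]{AC17}. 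So the repair is to discard the $S_{n+3}$ entirely and let the whole family of involutions $\phi_{i,j}$ (your $\iota$ is one of them) generate the group; your closing concern about realizing these Cremona symmetries biregularly on $F$ is legitimate and is resolved exactly as you suggest, but it is this entire $(\zz/2\zz)^{n+2}$, not $S_{n+3}$ plus one involution, that does the work.
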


\smallskip
The structure of the paper is as follows. Section~\ref{sec_toric} is dedicated to toric Fano manifolds.
In Section~\ref{sec_index} we discuss the fibre-likeness of Fano manifolds with high index.
Section~\ref{sec_fourfolds} is devoted to the study of certain families of Fano manifolds with high Picard number.
Finally, in Section~\ref{sec_questions} we present new questions and research directions.

\subsection*{Acknowledgement}

We like  to thank Cinzia Casagrande for interesting comments on this work. 
The second-named author has been supported by the DFG grant ``Gromov-Witten Theorie, Geometrie und Darstellungen'' (PE 2165/1-2) and by the Swiss National Science Foundation Grant ``Algebraic subgroups of the Cremona groups'' (200021--159921).
The fourth-named author is supported by the DFG grant ``Birationale Methoden in Topologie und Hyperk\"ahler Geometrie''.

\subsection*{Notation} 

The term \emph{variety} stays for separated, integral and proper scheme of finite type over $\C$. 
A \emph{manifold} is a smooth variety.
For the definitions of singularities in the context of the minimal model program, see \cite[Section 2]{K13}.


\section{Toric Fano varieties}
\label{sec_toric}

The first class of examples we want to discuss is given by toric varieties. 
We will recall here only some notions and refer to \cite{CLS11} for an exhaustive treatment of the topic. 
Toric varieties can be described in terms of combinatorial data and this makes them particularly suitable to test general conjectures on Fano varieties.

Let $N$ be a free abelian group of rank $n$ and set $N_\qq:=N\otimes_\zz \qq$. 
Let $\Sigma \subset N_\qq$ be a fan of a $d$-dimensional toric Fano variety $F$ and let $\De$ 
be the polytope associated to the anti-canonical polarisation. 
Furthermore, $M$ will denote the dual of $N$. 

The vertices of $\De$, denoted by $V(\De)$, are the generators of $\Sigma$. 
We denote by $O(\sigma)$ the closure of the orbit corresponding to $\sigma\in \Sigma$, which is an irreducible invariant subvariety.

Let $A_1$ be the group of 1-cycles on $F$ modulo numerical equivalence and set $N_1=A _1\otimes \qq$. 
Inside $N_1$, we consider the Kleiman-Mori cone $\NE(F)$ generated  by the effective 1-cycles. 
We have the following standard exact sequence (cf. \cite[Chapter~4, Theorem 1.3]{CLS11})
\begin{equation}\label{toricses1}
0 \rightarrow A_1(F) \rightarrow \zz^{V(\De)}\rightarrow N \rightarrow 0,
\end{equation}
which by duality yields the following one:
\begin{equation}\label{toricses2}
0 \rightarrow M \rightarrow \zz^{V(\De)}\rightarrow \NS(F) \rightarrow 0.
\end{equation}

\subsection{Vertex-transitive Polytopes}

The following definition of vertex-transitive for polytopes is classical, 
although some authors refer to them as isogonal polytopes (cf. \cite[19.5, Enumeration]{GO_04}).

\begin{definition}
A polytope $\Delta$ is \emph{vertex-transitive} if $\Aut(\Delta)$ acts transitively on the vertices of $\Delta$. If $\Delta$ is associated to a toric Fano
variety $F$, then $F$ is vertex-transitive.
\end{definition}

This class of varieties is interesting from our prospective for the following reason. 

\begin{lemma}\label{lem:vt}
Vertex-transitive Fano manifolds are fibre-like.
\end{lemma}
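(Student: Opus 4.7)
The plan is to reduce the statement to the sufficient criterion of Theorem \ref{suff_criterion}: I will show that
\[
\NS(F)_\qq^{\Aut(F)} = \qq K_F,
\]
using the combinatorial group $\Aut(\De)$ as a particularly well-behaved subgroup of $\Aut(F)$.

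First I would observe that any symmetry of the polytope $\De$ preserves the fan $\Sigma$, and hence lifts to a biregular automorphism of the toric variety $F$; this is classical toric geometry (see \cite{CLS11}) and produces an inclusion $\Aut(\De) \hookrightarrow \Aut(F)$ that acts on $\NS(F)$ by permuting the classes of the torus-invariant prime divisors $D_\rho$. From this inclusion one obtains $\NS(F)_\qq^{\Aut(F)} \subseteq \NS(F)_\qq^{\Aut(\De)}$, and since the reverse inclusion $\qq K_F \subseteq \NS(F)_\qq^{\Aut(F)}$ is automatic (the canonical class is fixed by every automorphism), the problem reduces to showing $\NS(F)_\qq^{\Aut(\De)} = \qq K_F$.

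The core step is to take $\Aut(\De)$-invariants in \eqref{toricses2} after tensoring with $\qq$. Because $\Aut(\De)$ embeds faithfully in the finite permutation group of $V(\De)$, it is finite, and consequently the invariants functor is exact on $\qq$-vector spaces. This yields a short exact sequence
\[
0 \to M_\qq^{\Aut(\De)} \to \bigl(\qq^{V(\De)}\bigr)^{\Aut(\De)} \to \NS(F)_\qq^{\Aut(\De)} \to 0.
\]
Here vertex-transitivity enters directly: since $\Aut(\De)$ has a single orbit on $V(\De)$, the permutation representation $\qq^{V(\De)}$ has a one-dimensional invariant subspace, spanned by the all-ones vector $(1,\dots,1)$. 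Its image in $\NS(F)_\qq$ is $\sum_{\rho}[D_\rho] = -K_F$, which is nonzero as $F$ is Fano, so surjectivity of the right map forces $\NS(F)_\qq^{\Aut(\De)} = \qq K_F$, concluding the proof via Theorem \ref{suff_criterion}.

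I do not foresee a substantial obstacle in this argument: everything is a direct combination of the known exact sequence \eqref{toricses2}, the finiteness of $\Aut(\De)$, and the translation of vertex-transitivity into the dimension of an invariant subspace. The one point that deserves care is the identification $\Aut(\De) \subseteq \Aut(F)$, and in particular the compatibility of the polytope-symmetry action with the permutation action on the divisors $\{D_\rho\}$; this is the only place where one genuinely uses that the combinatorial symmetry is realised geometrically.
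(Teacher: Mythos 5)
Your proposal is correct and follows essentially the same route as the paper: take $\Aut(\De)$-invariants of the exact sequence \eqref{toricses2} over $\qq$ and use vertex-transitivity to see that the middle term is one-dimensional, then invoke Theorem \ref{suff_criterion}. The only (harmless, and arguably cleaner) difference is at the last step: the paper cites a lemma from \cite{CFST_16} to conclude $M_\qq^{\Aut(\De)}=0$, whereas you observe directly that the all-ones vector maps to $-K_F\neq 0$, which gives the same conclusion.
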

\proof
Let $F(\De)$ be a Fano toric variety and let  $G=\Aut(\De)$. 
As explained in the proof of \cite[Theorem 5.7]{CFST_16}, the exact sequence in \eqref{toricses2} yields the following sequence
\[
0 \rightarrow M_\qq^G \rightarrow (\qq^{V(\De)})^G \rightarrow \NS(F)_\qq^G \rightarrow 0.
\]
Then, $F$ is fibre-like if and only if $t-k=1$, where  $t$ is the number of orbits of the action of $G$ on $V(\De)$ and $k=\dim M_\qq ^G$.

If $F$ is vertex-transitive, then $t=1$ and so \cite[Lemma 5.10]{CFST_16} implies that $k=0$, which means that $F$ is fibre-like. 
\endproof

Denote by $d$ the dimension of the toric Fano variety $F$ and by $m$ the number of vertices of $\Delta$. 

The first non-trivial class of vertex-transitive Fano varieties are {\it $t$-del Pezzo manifolds}.
\begin{definition}\label{def:del Pezzo}
The $d$-dimensional \emph{$t$-del Pezzo manifold} $V_d$ (with $d$ even) is the smooth toric Fano variety whose associated polytope has vertices
\[
V(\De)=\{e_1, \ldots, e_d, -e_1, \ldots, -e_d, (e_1+\ldots +e_d), -(e_1+\ldots +e_d) \},
\]
 where $e_1, \ldots, e_d$ is the standard basis of $N_\qq$. 
\end{definition}

\begin{remark}
In the literature on toric geometry, these manifolds are simply named {\it del~Pezzo varieties}. 
We added the prefix ``$t-$'' in order to distinguish them from the \emph{del~Pezzo manifolds} appearing in Section~\ref{sec_index}.
\end{remark}

$T$-del Pezzo polytopes are symmetric with respect to the origin, i.e., $-\De = \De$. Polytopes satisfying this condition are 
also said to be \emph{centrally symmetric}, cf. \cite{VK85}.
A classic result by Voskresenkii and Klyachko shows that del Pezzo varieties are essentially the only centrally symmetric toric varieties.

\begin{theorem}[{\cite[Theorem 6]{VK85}}]\label{veve}
Let $F$ be a toric Fano manifold such that $\De$ is centrally symmetric. Then $F$ is isomorphic to a product of projective lines and $t$-del Pezzo varieties.
\end{theorem}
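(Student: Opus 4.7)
The approach is to analyse the fan $\Sigma$ combinatorially by exploiting the involution on the vertex set induced by $-\Delta = \Delta$, together with Batyrev's theory of primitive collections. Central symmetry pairs the vertices into antipodal pairs $\{v,-v\}$, and since the cones of $\Sigma$ are strictly convex each such pair is automatically a primitive collection, with primitive relation $v+(-v)=0$. The strategy is to iteratively decompose $\Sigma$ as a product of sub-fans, each of which is either the fan of $\pp^1$ or that of a $t$-del Pezzo variety, and then to invoke the toric dictionary to translate this into the claimed isomorphism of varieties.

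First I would establish a splitting criterion. Fix a vertex $v$, and let $L_v \subseteq N$ denote the saturation of the sublattice generated by those vertices $w\ne \pm v$ which appear in a common maximal cone of $\Sigma$ with either $v$ or $-v$. I would show that $\Sigma$ factors as the product $\Sigma_{\pp^1}\times \Sigma'$, with $\Sigma'$ the induced fan on $L_v$, precisely when $N=\zz v \oplus L_v$ and every maximal cone of $\Sigma$ is spanned by $v$ or $-v$ together with rays lying entirely in $L_v$. Smoothness (unimodularity of the maximal cones) ensures that such a direct sum decomposition of $N$ lifts to a fan product, and both the Fano and smoothness properties are inherited by $\Sigma'$, whose polytope remains centrally symmetric.

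Next I would induct on $\dim F$. If some vertex $v$ satisfies the splitting criterion, peel off the $\pp^1$ factor and apply the inductive hypothesis to $\Sigma'$. Otherwise, for every vertex $v$ the opposite vertex $-v$ lies in the $\rr$-span of the link of $v$, i.e., $-v$ is a non-trivial combination of the other rays of some maximal cone of $\Sigma$ containing $v$. Combining the resulting primitive relations with unimodularity and central symmetry, I would then show that the vertex set decomposes into lattice-orthogonal blocks, each of which takes the $t$-del Pezzo form $\{\pm e_1,\ldots,\pm e_d,\pm(e_1+\cdots+e_d)\}$ for some basis $e_1,\ldots,e_d$ of the corresponding sublattice of $N$. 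The orthogonality of blocks then yields the full product decomposition.

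The main obstacle I expect is precisely the terminal case: ruling out exotic no-splitting vertex configurations and forcing them to be of del Pezzo type. The key mechanism is that the primitive relation arising from $\{-v,w_1,\ldots,w_k\}$, where $w_1,\ldots,w_k$ are the remaining rays of a maximal cone containing $v$, has positive right-hand side whose summands must themselves be centrally symmetric partners of further vertices of $\Delta$. Unimodularity of the cones generated by these new collections progressively tightens the combinatorics until only the del Pezzo lattice remains, after which proving that distinct blocks are lattice-orthogonal delivers the product decomposition. It is this simultaneous interplay of unimodularity, completeness and central symmetry that makes the terminal step the delicate part of the argument, and is the heart of the classification carried out in \cite{VK85}.
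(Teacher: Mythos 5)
This statement is not proved in the paper: it is quoted verbatim from Voskresenskii--Klyachko \cite[Theorem 6]{VK85} and used as a black box (for instance in the proof of Proposition \ref{2-neighbourly}), so there is no internal argument to compare yours against. Judged on its own, your proposal is a plausible outline of the known strategy but has a genuine gap at the decisive step. The preliminary observations are fine: antipodal pairs $\{v,-v\}$ are indeed primitive collections with relation $v+(-v)=0$ (the cone they span is not strictly convex, while each singleton is a ray of $\Sigma$), and a splitting criterion of the type you describe --- a decomposition $N=\zz v\oplus L_v$ compatible with every maximal cone forces $\Sigma\cong\Sigma_{\pp^1}\times\Sigma'$ --- can be made precise using smoothness. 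The problem is the terminal case. Your sentence that ``unimodularity of the cones generated by these new collections progressively tightens the combinatorics until only the del Pezzo lattice remains'' is exactly the content of the theorem, not an argument for it, and you acknowledge as much by calling it ``the heart of the classification carried out in \cite{VK85}.'' Nothing in the proposal actually excludes other centrally symmetric smooth configurations, establishes that the irreducible blocks are pairwise lattice-orthogonal, or recovers the parity constraint: the configuration $\{\pm e_1,\ldots,\pm e_d,\pm(e_1+\cdots+e_d)\}$ is smooth only for $d$ even (cf.\ Definition \ref{def:del Pezzo}), and your sketch never explains where that restriction comes from.

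There are also smaller imprecisions that would need repair before the induction runs. The set $\{-v,w_1,\ldots,w_k\}$ obtained from a maximal cone containing $v$ need not be a primitive collection, so you cannot directly speak of ``its'' primitive relation; and the dichotomy ``either some vertex satisfies the splitting criterion, or $-v$ lies in the span of the link of $v$'' is not the logical negation of the criterion you formulated (the failure could instead be that the complementary rays do not generate a complete fan in $L_v$, or that the lattice does not split). A workable self-contained proof must carry out the combinatorial classification of the non-splitting blocks explicitly --- for example by analysing, for a fixed maximal cone $\langle v_1,\ldots,v_d\rangle$, which sign patterns and sums of the $v_i$ can occur as further vertices of a smooth centrally symmetric $\De$ --- and this is precisely the part your proposal defers. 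As written, it is a plan for a proof rather than a proof.
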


Coming back to vertex-transitive varieties, we can prove the following structural result.

\begin{lemma}\label{product}
Let $F=F(\De)$ be a toric Fano manifold which is vertex-transitive. Then there exists a unique vertex-transitive Fano toric manifold $F_{min}$ and a positive integer $n$ such that $F \cong (F_{min})^{n}$.  
\end{lemma}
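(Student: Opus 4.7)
My plan is to first decompose the toric manifold $F$ into indecomposable product factors and then exploit vertex-transitivity to force all such factors to be isomorphic.

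The first step is to appeal to the structure theorem for smooth toric Fano varieties: $F$ admits a decomposition $F = F_1 \times \cdots \times F_k$ in which each $F_i = F(\Delta_i)$ is an indecomposable smooth toric Fano manifold. At the level of the ambient lattice, this corresponds to a splitting $N_\qq = \bigoplus_{i=1}^k N_{i,\qq}$ with $\Delta_i \subset N_{i,\qq}$, and $\Delta = \mathrm{Conv}(\Delta_1 \cup \cdots \cup \Delta_k)$ where the $\Delta_i$ live in complementary subspaces. In particular, the set of vertices of $\Delta$ is the disjoint union $V(\Delta) = \bigsqcup_i V(\Delta_i)$.

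The second step is to show that this partition is intrinsic to $\Delta$, so that $\Aut(\Delta)$ necessarily permutes the summands. One route is to prove a Krull--Schmidt type statement: a smooth toric Fano variety admits a unique decomposition into indecomposable factors. Then any $\phi \in \Aut(\Delta)$ induces a permutation $\sigma_\phi \in S_k$ with $\phi(\Delta_i) = \Delta_{\sigma_\phi(i)}$ and $\Delta_i \cong \Delta_{\sigma_\phi(i)}$. Alternatively, one can extract the subsets $V(\Delta_i)$ combinatorially from $V(\Delta)$ -- for instance as classes of the finest equivalence relation on $V(\Delta)$ under which two vertices must lie in the same summand of any direct sum decomposition -- and observe that $\Aut(\Delta)$ preserves any such combinatorial datum.

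Now vertex-transitivity of $\Aut(\Delta)$ on $V(\Delta)$ forces the induced action on $\{1, \ldots, k\}$ to be transitive, so that all $\Delta_i$ are pairwise isomorphic to a common polytope $\Delta_{min}$ and hence $F \cong F_{min}^{k}$ with $F_{min} := F(\Delta_{min})$. The stabilizer of $\Delta_1$ inside $\Aut(\Delta)$ maps to $\Aut(\Delta_1)$, and combining transitivity on all vertices of $\Delta$ with transitivity on the summands shows that this stabilizer acts transitively on $V(\Delta_1)$; hence $F_{min}$ is itself vertex-transitive. Uniqueness of $F_{min}$ and of the exponent $n = k$ follows from the uniqueness of the indecomposable product decomposition. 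The main obstacle I anticipate is precisely the Krull--Schmidt type statement used in the second step: although it is standard folklore, it requires a careful combinatorial argument -- typically by extracting the indecomposable summands intrinsically from the face structure of $\Delta$ -- and is really the technical heart of the proof.
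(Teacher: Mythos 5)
Your proof is correct and follows essentially the same route as the paper: decompose $\De$ into its prime (indecomposable) factors, observe that $\Aut(\De)$ permutes them, and let vertex-transitivity force all factors to be isomorphic and themselves vertex-transitive. The Krull--Schmidt-type uniqueness you flag as the technical heart is precisely what the paper outsources to its citation of \cite[Theorem A]{GH16}, which identifies $\Aut(\De)$ for a prime decomposition $\De=\De_1^{n_1}\times\ldots\times\De_r^{n_r}$ as the product of the wreath products $\Aut(\De_i)\rtimes S_{n_i}$, from which the conclusion is immediate.
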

\begin{proof}
Let $\De= \De_1^{n_1} \times \ldots \times \De_r^{n_r}$ be a prime decomposition of $\De$. The automorphism group of $\De$ is given by (see for example \cite[Theorem A]{GH16})
$$
\Aut(\De)= \prod_{i=1}^r (\Aut(\De_i) \rtimes S_{n_i})
$$
with its natural action on $\De$. 

Since $\Aut(\De)$ acts transitively on the vertices of $\De$, the lemma follows immediately.
\end{proof}

\subsection{Primitive collections}

Primitive collections are an essential tool to study the birational geometry of Fano toric varieties. We refer to \cite{Reid83} and \cite{Cas03a} for further details.

\begin{definition}
Let $F=F(\De)$ be a toric Fano variety. A subset $P \subset V(\Delta)$ is called a \emph{primitive collection} if the cone generated by $P$ is not in $\Sigma$, but for any $x \in P$ the elements of $P \setminus \{x\}$ generate a cone in $\Sigma$. 

For a primitive collection $P=\{x_1, \ldots,x_k\}$ denote by $\sigma(P)$ the (unique) minimal cone in $\Sigma$ such that $(x_1+\ldots +x_k) \in \sigma(P)$ . Let $y_1,\ldots,y_h$ be generators of $\sigma(P)$, then
\begin{equation}\label{linear relation}
r(P)\colon x_1+\ldots +x_k=b_1y_1+\ldots +b_hy_h
\end{equation}
where $b_i$ is a positive integer for all $1 \le i \le h$: we have simply written the element $x_1+\ldots +x_k$ in terms of the generators $y_1,\ldots,y_h$ (the coefficients are positive since $(x_1+\ldots +x_k)$ is in the cone $\sigma(P)$).

The linear relation \eqref{linear relation} is called \emph{the primitive relation of $P$} and the cone $\sigma(P)$ is called the focus of $P$. 
The integer $k$ is called the {\it length of $r(P)$} and the {\it degree of $P$} is defined as $\deg P= k - \sum b_i$.
\end{definition}

Here it is convenient to write down explicitly the group of $1$-cylces $A_1$ of $F$ as:
$$
A_1(F) \cong \left\lbrace (b_x)_{x \in V(\De)} \in \Hom(\zz^{m}, \zz)\   \Bigg| \ \sum_{x \in V(\De)} b_x x=0\right\rbrace .
$$
The previous isomorphism is clear looking at the exact sequence \eqref{toricses1}. So it is natural to identify primitive relations with the associated cycles. Moreover, we work on Fano varieties, so $\deg P = -(K_F \cdot r(P)) >0$ for all primitive relations.

Consider now a primitive collection $P$ on $F$ for which the relation $r(P)$ is {\it extremal}, meaning that it generates an extremal ray in $\NE(F)$. One sees that the exceptional locus of the associated contraction is given by $O(\sigma(P))$ and moreover, according to the dimension of $\sigma(P)$, one recovers:
\begin{itemize}
\item \emph{divisorial contraction} when $\sigma(P)$ is a one-dimensional cone and the contracted divisor is precisely the one associated to the ray;
\item \emph{Mori fibration}, when $\sigma(P)$ coincides with the origin;
\item \emph{flipping contraction} otherwise.
\end{itemize} 

Let us recall some useful results.

\begin{proposition}[{\cite[Prop. 4.3]{Cas03a}}]\label{deg1}
Let $\gamma\in \NE(F)\cap A_1(F)$ be a $1$-cycle of $F$ for which $(K_F\cdot \gamma)=-1.$ Then $\gamma$ is extremal.
\end{proposition}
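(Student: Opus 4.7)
The plan is to argue by contradiction, using ampleness and integrality of $-K_F$ in combination with the toric description of $\NE(F)$ via primitive relations. First, a general positivity observation: since $F$ is a smooth Fano manifold, $-K_F$ is ample and Cartier, so for any non-zero $\xi\in\NE(F)\cap A_1(F)$ the intersection number $(-K_F\cdot\xi)$ is a positive integer, and hence $(-K_F\cdot\xi)\ge 1$. Applied to primitive relations, this says $\deg P \ge 1$ for every primitive collection $P$, recovering the fact already recorded in the text right before the statement.

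I would then suppose, for contradiction, that $\gamma$ is not extremal and try to exhibit a splitting $\gamma = \gamma_1+\gamma_2$ with $\gamma_1,\gamma_2\in \NE(F)\cap A_1(F,\zz)$ both non-zero. Applying the positivity observation to each summand would immediately yield $1 = (-K_F\cdot\gamma)=(-K_F\cdot\gamma_1)+(-K_F\cdot\gamma_2)\ge 2$, a contradiction; hence $\gamma$ must lie on an extremal ray.

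Producing such an integer splitting is the core of the argument, and it is here that the toric hypothesis enters. By Reid's theory the Mori cone $\NE(F)$ of a smooth Fano toric variety is a rational polyhedral cone generated by the integer primitive-relation classes $r(P)$. Combining this with Gordan's lemma one expects the lattice semigroup $\NE(F)\cap A_1(F,\zz)$ to be generated by the integer classes on extremal rays, so that a non-extremal lattice class $\gamma$ decomposes as a non-negative integer combination involving at least two distinct extremal generators, providing the required splitting $\gamma_1+\gamma_2$.

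The principal obstacle is exactly this semigroup-generation statement. A priori, a lattice point in the relative interior of a higher-dimensional face of $\NE(F)$ might decompose along extremal rays only with non-integral coefficients (the "midpoint of a lattice edge" scenario), and such an interior indecomposable class would block the naive argument. Ruling this out is where the smoothness of $F$ and the bound $\deg P \ge 1$ have to intervene in an essential way: one must show that whenever $\gamma$ sits in a face of dimension $\ge 2$, the finer structure of primitive relations on a smooth Fano toric variety forces a genuinely integral decomposition of $\gamma$, and this is the step that requires the most care.
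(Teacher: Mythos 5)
The paper gives no proof of this statement---it is quoted verbatim from Casagrande \cite{Cas03a}---so your attempt has to stand on its own, and as written it is not a proof: the step you yourself flag as ``the principal obstacle'' is exactly the content of the proposition, and you do not carry it out. The easy half is fine: if one can write $\gamma=\gamma_1+\gamma_2$ with $\gamma_1,\gamma_2$ nonzero \emph{integral} classes in $\NE(F)$, then ampleness and integrality of $-K_F$ give $(-K_F\cdot\gamma_i)\ge 1$, hence $(-K_F\cdot\gamma)\ge 2$, a contradiction. But producing such an integral splitting for a non-extremal lattice class is precisely what needs proof, and the route you propose for it fails. Gordan's lemma gives finite generation of the semigroup $\NE(F)\cap A_1(F)$, but its Hilbert basis in general contains indecomposable elements that do \emph{not} lie on extremal rays (already for the cone spanned by $(1,0)$ and $(1,N)$ in $\zz^2$, the lattice points $(1,j)$ with $0<j<N$ are indecomposable in the semigroup yet interior to the cone), so ``generation by the integer classes on the extremal rays'' is false for general rational polyhedral cones and cannot be invoked. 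Writing $\gamma=\sum a_i\gamma_i$ along the extremal generators of its minimal face only yields $a_i\in\qq_{>0}$ with $\sum a_i\le 1$, which is perfectly consistent with $\gamma$ being an indecomposable lattice point interior to a face of dimension $\ge 2$; nothing in your argument excludes this.

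Ruling out that scenario requires genuinely toric input---the explicit form of wall/primitive relations on a smooth toric variety and Reid--Batyrev's description of $\NE(F)$---which is what Casagrande's proof of \cite[Prop.~4.3]{Cas03a} supplies and what your sketch only gestures at (``one must show that \dots\ the finer structure of primitive relations \dots\ forces a genuinely integral decomposition''). A sentence of the form ``one must show $X$, and this is the step that requires the most care'' identifies the difficulty; it does not resolve it. Either reproduce Casagrande's argument for that step or cite the result, as the paper does.
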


\begin{theorem}[{\cite[Theorem 2.4]{Reid83}}, {\cite[Theorem 1.5]{Cas03a}}]\label{sum}
Let $R\subset\NE(F) $ be an extremal ray and let $\gamma \in R \cap A_1(F)$ be a primitive cycle. Then there exists a primitive collection $P=\{x_1, \ldots, x_k\}$ such that 
$$
\gamma=r(P)\colon  x_1+\ldots +x_k=b_1y_1+\ldots b_hy_h.
$$ 
Moreover, for any cone $\nu=\langle z_1, \ldots,z_t \rangle$ which verifies
\begin{itemize}
\item  $\{z_1,\ldots,z_t\} \cap \{x_1,\ldots,x_k, y_1, \ldots, y_h\}= \emptyset$; and 
\item $\langle y_1,\ldots,y_h\rangle + \nu \in \Sigma$; 
\end{itemize}
the following holds for all $i=1, \ldots, h$:
$$
\langle x_1, \ldots, \check{x_i}, \ldots, x_k,y_1\ldots , y_h\rangle + \nu \in \Sigma.
$$
\end{theorem}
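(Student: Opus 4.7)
My plan is to handle the two assertions separately. The first one is essentially formal; the second is the technical heart of the statement.

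\textbf{Existence of the primitive relation.} My starting point is the description of $\NE(F)$ for a smooth toric variety as a rational polyhedral cone generated by the classes of primitive relations. This allows me to write
\[
\gamma = \sum_{P'} a_{P'}\, r(P'), \qquad a_{P'} \in \qq_{\geq 0}.
\]
Since $\gamma$ lies in the extremal ray $R$, each $r(P')$ with $a_{P'} > 0$ must itself lie on $R$. Each such $r(P')$ is already an integral class in $R$, and $\gamma$ is primitive by hypothesis, so the decomposition must collapse to a single primitive relation $r(P)$ with coefficient one. This produces the required primitive collection $P=\{x_1,\dots,x_k\}$ with $\gamma = r(P)$ and focus $\sigma(P)=\langle y_1,\dots,y_h\rangle$.

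\textbf{Compatibility property.} My plan here is to first dispose of the case $\nu=0$ and then to graft $\nu$ back in. Fix an index $i$; by the defining property of a primitive collection, the cone $\tau_i := \langle x_1,\dots,\check{x_i},\dots,x_k\rangle$ already lies in $\Sigma$. What has to be upgraded is the join $\tau_i + \sigma(P) \in \Sigma$. I would achieve this via a wall-crossing argument in the star of $\sigma(P)$: starting from a maximal cone $\mu$ of $\Sigma$ containing $\sigma(P)$, one uses the linear relation $r(P)$ to exchange the $y_j$'s for the $x_\ell$'s (with $\ell \neq i$) one at a time. Each wall crossed is simplicial because $F$ is smooth, and the positivity of the coefficients $b_j$ on the right-hand side of $r(P)$ keeps the new cone inside the same linear half-space, so it stays inside $\Sigma$.

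For general $\nu$, the hypotheses $\sigma(P) + \nu \in \Sigma$ and $\{z_1,\dots,z_t\}\cap\{x_1,\dots,x_k,y_1,\dots,y_h\}=\emptyset$ allow me to run exactly the same exchanges inside the star of $\nu$: none of the generators of $\nu$ participates in the linear relation $r(P)$, so the combinatorial argument extends verbatim and yields $\tau_i + \sigma(P) + \nu \in \Sigma$, which is the desired conclusion.

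The first step is a formal consequence of the generation of $\NE(F)$ by primitive relations, so the main obstacle lies in the second part. The delicate point is to ensure that each successive wall-crossing truly stays in $\Sigma$ and that, uniformly in $i$, no unexpected combinatorial incompatibility arises when $\nu$ is added. This is precisely what the disjointness hypothesis on $\nu$ is designed to control, and it is also why the argument reduces cleanly to the case $\nu=0$ via restriction to the star of $\nu$.
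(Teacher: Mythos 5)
The paper offers no proof of this statement --- it is imported from Reid and Casagrande --- so your attempt has to stand on its own, and it does not yet. In the first step, the collapse of the decomposition is not justified: from $\gamma=\sum_{P'}a_{P'}r(P')$ with every $r(P')$ lying on $R$ you only get $r(P')=m_{P'}\gamma$ with $m_{P'}\in\Z_{>0}$, and integrality of the $r(P')$ together with primitivity of $\gamma$ does not exclude that every primitive relation on $R$ is a non-primitive multiple of $\gamma$ (e.g.\ $\gamma=\tfrac12\,r(P')$ with $r(P')=2\gamma$ is a perfectly good nonnegative rational decomposition). What closes this gap is the observation that every primitive relation is itself a primitive lattice vector of $A_1(F)$: at least one $x_i\in P$ is not a ray of $\sigma(P)$ (otherwise $P$ would span a face of $\sigma(P)$, hence a cone of $\Sigma$), so the class $r(P)\in\Z^{V(\De)}$ has a coefficient equal to $1$, and $A_1(F)$ is saturated in $\Z^{V(\De)}$ by the sequence \eqref{toricses1}; combined with the fact that an extremal ray of a finitely generated strongly convex cone must contain one of the generators, this forces $r(P)=\gamma$.

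The second part is where the theorem really lives, and your sketch does not prove it. The decisive hypothesis --- extremality of $R$ --- is never used there: positivity of the $b_j$ and the remark that the exchanged cone ``stays in the same half-space'' are properties of the linear relation $r(P)$ alone, hence available for \emph{every} primitive collection, whereas the ``Moreover'' clause fails for non-extremal primitive relations (this is precisely the dichotomy exploited in Proposition \ref{extremal}). Lying in a half-space says nothing about membership in the particular fan $\Sigma$. In addition, $r(P)$ involves only $k+h$ rays and is not a wall relation, so ``exchanging generators one at a time using $r(P)$'' does not describe actual wall crossings of $\Sigma$. The genuine argument takes a nef divisor supporting $R$, equivalently a convex support function that is linear exactly on the cones meeting the locus swept out by curves in $R$, and uses that convexity to show that the cones $\langle x_1,\dots,\check{x_i},\dots,x_k\rangle+\sigma(P)+\nu$ are cones of $\Sigma$ tiling $\langle x_1,\dots,x_k\rangle+\sigma(P)+\nu$; without this input, the assertion that each new cone ``stays inside $\Sigma$'' is a restatement of the conclusion, not a proof.
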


\begin{proposition}[{\cite[Prop. 3.4]{Cas03a}}]\label{difference}
Let $P$ a primitive extremal collection for $F$ and write $\sigma(P)=\langle y_1,\ldots,y_h\rangle$. Then for any other primitive collection $Q\neq P$ for which $P \cap Q \ne \emptyset$, the set $(Q \setminus P) \cup  \{y_1,\ldots,y_h\}$ contains a primitive collection.
\end{proposition}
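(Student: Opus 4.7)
The plan is to argue by contradiction. Observe first that, since the fan $\Sigma$ is simplicial (as $F$ is smooth), a set of ray generators contains a primitive collection if and only if it does not span a cone of $\Sigma$. Hence, if $(Q\setminus P)\cup\{y_1,\ldots,y_h\}$ contained no primitive collection, then the cone
$$
\tau := \langle (Q\setminus P)\cup\{y_1,\ldots,y_h\}\rangle
$$
would lie in $\Sigma$. The strategy is to feed this assumption into Theorem \ref{sum} and produce a cone of $\Sigma$ containing all elements of $Q$ among its generators, contradicting the primitivity of $Q$.

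To this end, set $Q':=(Q\setminus P)\setminus\{y_1,\ldots,y_h\}$ and $\nu:=\langle Q'\rangle$. By construction, $Q'$ is disjoint from $\{x_1,\ldots,x_k\}\cup\{y_1,\ldots,y_h\}$, and $\langle y_1,\ldots,y_h\rangle+\nu$, being a face of $\tau\in\Sigma$, is itself a cone of $\Sigma$. Both hypotheses of Theorem \ref{sum} are therefore verified, and for every $i=1,\ldots,k$ the cone
$$
\sigma_i := \langle x_1,\ldots,\check{x}_i,\ldots,x_k,\,y_1,\ldots,y_h\rangle + \nu
$$
lies in $\Sigma$.

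The decisive move is then to choose $i$ so that $x_i\in P\setminus Q$. Such an $i$ exists because $P\not\subseteq Q$: indeed, if $P$ were contained in the primitive collection $Q$, we could select some $y\in Q\setminus P$ (non-empty since $Q\neq P$); then $Q\setminus\{y\}$ would span a cone of $\Sigma$ by primitivity of $Q$, and $P\subseteq Q\setminus\{y\}$ would force $\langle P\rangle$ to be a face of that cone, hence an element of $\Sigma$, contradicting the primitivity of $P$. With $x_i\in P\setminus Q$ in hand, each element of $Q$ is a generator of $\sigma_i$: the piece $Q\cap P$ sits in $\{x_1,\ldots,\check{x}_i,\ldots,x_k\}$ because $x_i\notin Q$; the piece $Q\cap\{y_1,\ldots,y_h\}$ sits in $\{y_1,\ldots,y_h\}$; and $Q\setminus(P\cup\{y_1,\ldots,y_h\})=Q'$. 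Therefore $\langle Q\rangle$ is a face of $\sigma_i\in\Sigma$, hence itself a cone of $\Sigma$, contradicting the primitivity of $Q$.

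The main obstacle is the choice of $i$: the natural guess $x_i\in P\cap Q$ fails, because then $x_i\in Q$ is not among the generators of $\sigma_i$ and one recovers only the (already expected) statement $\langle Q\setminus\{x_i\}\rangle\in\Sigma$. The short combinatorial argument giving $P\not\subseteq Q$, which combines the primitivity of $P$ and of $Q$, is what unlocks the contradiction.
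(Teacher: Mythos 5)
Your argument is correct and is essentially Casagrande's own proof of this statement, which the paper only cites from \cite{Cas03a} without reproducing: assume the set generates a cone, apply Theorem \ref{sum} with $\nu=\langle (Q\setminus P)\setminus\{y_1,\dots,y_h\}\rangle$ (note the conclusion there should be read as holding for all $i=1,\dots,k$; the ``$h$'' in the paper's statement of Theorem \ref{sum} is a typo), and choose $x_i\in P\setminus Q$, which exists because no primitive collection can properly contain another. Your justification of each step, including the existence of such an $x_i$ and the verification of the two hypotheses of Theorem \ref{sum}, is sound, so there is nothing to add.
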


The following observation is easy but useful for our analysis.

\begin{rem}\label{rmk}
Consider a relation
$$a_1x_1+\cdots+a_kx_k=b_1y_1+\cdots+b_hy_h$$
among the vertices of $\De$, with $a_i,b_j>0$ for all $i,j$. 

If $\sum a_i \ge \sum b_j,$
then \cite[Lemma 1.4]{Cas03a} implies that $\langle x_1\cdots, x_k\rangle \not\in \Sigma$.
\end{rem}

We recall now the following definition.

\begin{definition}[$k$-neighbourly polytope]
A polytope is \emph{$k$-neighbourly} if every set of $k$ vertices lies on one of its face. \newline
A Fano variety $F(\Delta)$ is $k$-neighbourly if the corresponding polytope $\Delta$ is.
\end{definition}

We want now to understand the structure of vertex-transitive polytopes: the following result is the first step towards a classification of vertex-transitive those.

\begin{proposition}\label{2-neighbourly} 
Let $F=F(\Delta)$ be a vertex-transitive toric Fano manifold. Then either
\begin{enumerate}
\item $F=(\pp^1)^d$ or $F=(V_k)^r$ for some $r$ and $k$ or
\item $\De$ is 2-neighbourly. 
\end{enumerate}
\end{proposition}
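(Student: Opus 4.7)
The plan is to locate a length-$2$ primitive collection inside $\Delta$ and analyse its primitive relation, using Proposition \ref{difference} together with Theorem \ref{sum}. Since $\Delta$ is not $2$-neighbourly, by definition there exist vertices $x_1 \neq x_2$ not lying on any common face of $\Delta$, so $\{x_1, x_2\}$ is a primitive collection. Write its primitive relation as $x_1 + x_2 = b_1 y_1 + \cdots + b_h y_h$. Because $F$ is Fano, the degree $2 - \sum_i b_i$ is strictly positive, forcing $\sum_i b_i \in \{0, 1\}$.

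First I would dispose of the easier alternative $\sum_i b_i = 0$, in which $x_2 = -x_1$. For any vertex $v$ pick $\phi \in \Aut(\Delta)$ with $\phi(x_1) = v$: since $\phi$ is $\mathbb{Z}$-linear, $\phi(x_2) = -v$ is again a vertex. Hence $V(\Delta) = -V(\Delta)$, i.e.\ $\Delta$ is centrally symmetric. Theorem \ref{veve} then identifies $F$ with a product of copies of $\mathbb{P}^1$ and of $t$-del Pezzo manifolds, and Lemma \ref{product} concentrates this product on a single factor $F_{\min}$ equal to $\mathbb{P}^1$ or some $V_k$, yielding conclusion (1).

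The heart of the argument is the remaining case $\sum_i b_i = 1$, in which $x_1 + x_2 = y$ for some vertex $y$; my plan is to show that this case reduces to the previous one. Observe first that $x_1, x_2, y$ are pairwise linearly independent (otherwise $x_1 = \pm x_2$, contradicting either $x_1\neq x_2$ or $y\neq 0$), and by Theorem \ref{sum} both cones $\langle x_1, y \rangle$ and $\langle x_2, y \rangle$ lie in $\Sigma$. Use vertex-transitivity to choose $\phi \in \Aut(\Delta)$ with $\phi(x_1) = x_2$, and set $w := \phi(x_2)$; then $\{x_2, w\}$ is a primitive collection with relation $x_2 + w = \phi(y)$. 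The pathological value $w = y$ is excluded because it would force the primitive ray generator $\phi(y) = x_2 + y$ to lie in the relative interior of the $2$-cone $\langle x_2, y \rangle \in \Sigma$, contradicting the fan axioms. After excluding the trivial swap $w = x_1$, Proposition \ref{difference} applied to the distinct primitive collections $\{x_1, x_2\}$ and $\{x_2, w\}$ (which share $x_2$) produces a new length-$2$ primitive collection $\{w, y\}$. One now iterates: its primitive relation is again of degree $1$, so either $w + y = 0$, and we fall back into the centrally symmetric case just treated, or one produces a further length-$2$ primitive collection.

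The main obstacle is precisely the termination of this iteration: one must show that the chain of length-$2$ primitive collections generated via Proposition \ref{difference}, under the constraint coming from vertex-transitivity and the fan-axiom obstruction exhibited above, cannot stay indefinitely in the $\sum b_i = 1$ regime. Finiteness of $V(\Delta)$ should ultimately close the argument, but a careful bookkeeping of the orbit structure is needed to guarantee that the chain eventually produces an antipodal relation. Once this step is secured, Theorem \ref{veve} combined with Lemma \ref{product} delivers the stated dichotomy.
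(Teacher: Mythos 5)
Your reduction of the length-two primitive relation to the two cases $x_1+x_2=0$ and $x_1+x_2=y$ (via the degree bound $2-\sum b_i>0$), and your treatment of the antipodal case via transitivity, Theorem \ref{veve} and Lemma \ref{product}, are correct and agree with the paper. The genuine gap is exactly where you flag it: the case $x_1+x_2=y$ is never actually disposed of. Your iteration scheme has two problems. First, the step producing a \emph{new} collection is not secured: vertex-transitivity only gives \emph{some} $\phi$ with $\phi(x_1)=x_2$, and you have no way to rule out $\phi(x_2)=x_1$, in which case $Q=P$ and Proposition \ref{difference} yields nothing. Second, and more seriously, even granting each step, finiteness of $V(\Delta)$ does not force termination in the antipodal case: a chain of degree-one primitive collections $\{x_1,x_2\},\{x_2,w\},\{w,y\},\dots$ can perfectly well cycle through a finite set of pairs without ever hitting a relation summing to zero, so the ``bookkeeping'' you defer is not a technicality but the entire content of the proposition in this case.

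The paper closes this case not by iteration but by a single global contradiction built on an orbit-counting argument. Assume no relation $x+y=0$ exists, so every length-two primitive relation has the form $x+x'=y$. Let $\{P_i\}_{1\le i\le r}$ be the $\Aut(\De)$-orbit of $P_1=\{x_1,x_2\}$ with relations $R_i$. Transitivity forces every vertex to occur the same number of times as the right-hand side of the $R_i$, whence $m\mid r$; this is incompatible with the $P_i$ being pairwise disjoint (which would give $2r=m\le r$). So two collections in the orbit overlap, say $x_1+x_2=y_1$ and $x_1+x_3=y_2$. The identity $x_2+y_2=x_3+y_1$ together with Remark \ref{rmk} shows $\{x_2,y_2\}$ and $\{x_3,y_1\}$ are primitive collections with the same focus $z_1$; Proposition \ref{deg1} and Theorem \ref{sum} then give $\langle y_1,y_2\rangle\in\Sigma$, while $y_1+y_2=x_1+z_1$ and Remark \ref{rmk} give $\langle y_1,y_2\rangle\notin\Sigma$ --- the desired contradiction. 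You would need to supply an argument of comparable strength (the overlap coming from counting, not from chaining) to make your proof complete.
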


\begin{proof}
Let us assume that $\De$ is not 2-neighbourly, which implies the existence of a primitive collection with two elements. We claim that this primitive relation can be assumed to be of the form 
\begin{equation}\label{2-neigh}
x+y=0.
\end{equation}
To show this, we assume such a relation does not exist and seek for contradiction. Take a primitive collection $P_1=\{x_1,x_2\}$ verifying the relation $R_1\colon x_1+x_2=y_1$. Let $\Aut(\De)$ act on $P_1$ to obtain a family of primitive collections $\mathcal P= \{ P_i \}_{1\le i \le r }$ with relations $\mathcal R= \{ R_i \}_{1\le i \le r }$. 
Since the action is transitive by hypothesis, any vertex of $\De$ appears the same number of times as right hand side of these relations and so the number of vertices $m:=\#\{V(\De)\}$ divides $r$. This implies that the $P_i$'s cannot be all disjoint, otherwise $2r=m$. Hence we may assume that $P_2=\{x_1,x_3\}$, $x_2 \ne x_3$, with relation $R_2\colon x_1+x_3=y_2$. 
\\The two relations $R_1$ and $R_2$ give $x_2+y_2=x_3+y_1$, which implies, by Remark \ref{rmk}, that $\{x_2,y_2\}$ is also a primitive collection. The two relations
$$R'\colon x_3+y_1=z_1 \mbox{ \ \ and  \ \ } R''\colon x_2+y_2=z_1$$
are extremal, so Proposition \ref{deg1} and Theorem \ref{sum} imply that $\langle y_1,y_2\rangle \in \Sigma$.
\\This is a contradiction, since  $y_1+y_2=x_1+z_1.$ We proved the existence of \eqref{2-neigh}. 

Now act with $\Aut(\Delta)$ to get exactly $m/2$ relations of the same form. One can verify that those are disjoint. Using the vertex-transitivity of $\De$, we deduce that for any vertex $\bar x$ there is a vertex $\bar y$ for which $\bar x+\bar y=0$, i.e. $\Delta$ is centrally symmetric. Theorem \ref{veve} concludes the proof.

\end{proof}

We study now the extremal contractions of 2-neighbourly vertex-transitive toric Fano manifolds.

\begin{lemma}\label{extremal_lemma}
Let $F=F(\Delta)$ be a vertex-transitive, 2-neighbourly toric Fano manifold. Then there exist an integer $k \ge 3$ and a set of primitive collections 
$$\mathcal P =\{P_i\}_{i=1,\ldots,r}$$
such that $r=m/k$, $|P_i|=k$, $\sigma(P_i)=0$ and $P_i \cap P_j= \emptyset$ for any $i \ne j$. Moreover, these are the only primitive relations with focus equal to zero. 
\end{lemma}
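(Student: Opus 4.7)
The strategy is to exhibit a single extremal primitive collection with focus at the origin; the full statement then follows by applying vertex-transitivity together with Proposition~\ref{difference}. A preliminary observation is that 2-neighbourliness of $\Delta$ forces every primitive collection to have cardinality at least three, so the bound $k \ge 3$ will be automatic once we find one primitive collection with $\sigma = 0$.

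I would begin by choosing an extremal primitive collection $P_1 = \{x_1, \ldots, x_k\}$ of minimal length among the extremal primitive collections of $F$ (such $P_1$ exists since $F$ is Fano). Assume, aiming at a contradiction, that the primitive relation of $P_1$ is
\[ x_1 + \cdots + x_k = b_1 y_1 + \cdots + b_h y_h \]
with $h \ge 1$. Vertex-transitivity provides $g \in \Aut(\Delta)$ with $g(x_1) = y_1$, so $y_1 \in gP_1$ and the primitive relation of $gP_1$ reads $y_1 + gx_2 + \cdots + gx_k = b_1\, gy_1 + b_2\, gy_2 + \cdots + b_h\, gy_h$. Summing the two relations and cancelling $y_1$ on both sides (using $b_1 \ge 1$) produces a linear dependence among vertices in which the sum of left-hand coefficients exceeds the sum of right-hand coefficients, as $k > \sum b_j$ by positivity of the Fano degree. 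Remark~\ref{rmk} then forces the left-hand vertex set to fail to generate a cone of $\Sigma$, hence to contain a primitive collection $Q$. Using Theorem~\ref{sum} applied to $P_1$, which guarantees that the cones $\langle x_2,\ldots,x_k, y_1, \ldots, y_h\rangle$ and its $g$-translate $\langle x_1, gx_2, \ldots, gx_k, gy_2, \ldots, gy_h\rangle$ belong to $\Sigma$, I would rule out $P_1$ as the only primitive collection contained in this set and extract $Q$ of length strictly less than $k$, contradicting the minimality of $P_1$.

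With such $P_1$ in hand, set $\mathcal P := \Aut(\Delta) \cdot P_1$. Vertex-transitivity yields $\bigcup_{P \in \mathcal P} P = V(\Delta)$, and each element of $\mathcal P$ is an extremal primitive collection of length $k$ with focus $0$. For pairwise disjointness, if $P, Q \in \mathcal P$ are distinct with $P \cap Q \neq \emptyset$, Proposition~\ref{difference} applied to the extremal $P$ with $\sigma(P)=0$ forces the proper subset $Q \setminus P \subsetneq Q$ to contain a primitive collection, contradicting the primitivity of $Q$. Hence $\mathcal P$ partitions $V(\Delta)$ into $r = m/k$ subsets of size $k$.

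Uniqueness is then immediate: any primitive collection $Q$ with $\sigma(Q)=0$ must share a vertex with some $P \in \mathcal P$ because $\mathcal P$ covers $V(\Delta)$, and the same application of Proposition~\ref{difference} gives $Q = P$. The main obstacle is the first step, namely producing an extremal primitive collection with focus zero. The delicate point is ensuring that the primitive collection extracted from the summed relation is strictly smaller than $P_1$ rather than being $P_1$ itself, which is where the auxiliary cones from Theorem~\ref{sum} and the coefficient count from Remark~\ref{rmk} must be carefully combined.
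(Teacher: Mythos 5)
Your overall skeleton (find one primitive collection with focus $0$, translate it around by $\Aut(\Delta)$, then prove disjointness and uniqueness) matches the paper's, but your first step contains a fatal gap. The paper obtains the seed collection $P_1$ with $\sigma(P_1)=0$ simply by citing \cite[Proposition 3.2]{Bat91}, which guarantees such a collection on \emph{any} smooth projective toric variety, with no extremality and no transitivity needed. You instead set out to produce an \emph{extremal} primitive collection with focus $0$, and this is a strictly stronger statement that is false in general: by Proposition \ref{extremal}, if any of the focus-zero collections of Lemma \ref{extremal_lemma} is extremal then $F\cong(\pp^{k-1})^r$, so for a vertex-transitive, $2$-neighbourly Fano such as the Klyachko variety $W^3_6$ (which is not a power of projective spaces) \emph{no} extremal primitive collection has focus zero, even though extremal primitive collections certainly exist (Theorem \ref{sum}). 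So the contradiction you are chasing in the first paragraph cannot be reached. Independently of this, the intended contradiction is also logically broken: you choose $P_1$ of minimal length \emph{among extremal primitive collections}, but the collection $Q$ you extract from the summed relation via Remark \ref{rmk} is only known to be primitive, not extremal, so a shorter $Q$ does not contradict your minimality assumption.

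The gap propagates: your disjointness and uniqueness arguments invoke Proposition \ref{difference}, which requires $P$ to be an \emph{extremal} primitive collection, and that hypothesis is exactly what is unavailable. The paper avoids this by a more elementary device: if $P_i=\{x_1,\dots,x_k\}$ and $P_j=\{x_1,\dots,x_h,y_{h+1},\dots,y_k\}$ both have focus $0$ and overlap, subtracting the two relations gives $x_{h+1}+\dots+x_k=y_{h+1}+\dots+y_k$, and Remark \ref{rmk} then contradicts the fact that the proper subset $\{x_{h+1},\dots,x_k\}\subsetneq P_i$ generates a cone of $\Sigma$; the same subtraction handles uniqueness. To repair your proof, replace the first paragraph by the citation to Batyrev (dropping extremality throughout) and replace the appeals to Proposition \ref{difference} by this subtraction argument.
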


\begin{proof}
The result in \cite[Proposition 3.2]{Bat91} implies that there exists a primitive collection $P_1$ with $\sigma(P_1)=0$. Define $k:=|P_1|$. Since $\De$ is 2-neighbourly, we have $k \ge 3$.

Act with $\Aut(\De)$ to get a set of primitive collections $\mathcal P= \{ P_i \}_{1\le i \le r }$ verifying $\sigma(P_i)=0$ and for which $\bigcup_{i=1}^r P_i=V(\De)$. Let us prove they are disjoint, assuming that $P_i \cap P_j \ne \emptyset$ for some $i,j$ and seeking for contradiction. Write $P_i=\{x_1, \ldots, x_k\}$ and $P_j=\{x_1, \ldots, x_h, y_{h+1}, \ldots ,y_k\}$ with $y_s \ne x_t$ for any $s,t$. Then 
$$
x_{h+1}+\ldots +{x_k}=y_{h+1}+\ldots +y_k.
$$ 
Remark \ref{rmk} gives the required contradiction, since $x_{h+1}, \ldots, x_k$ generate a cone in $\Sigma$. Moreover one sees that there are no other primitive relations with focus equal to zero. 
\end{proof}

\begin{proposition}\label{extremal}
In the notation of Lemma \ref{extremal_lemma}, assume that one of the relations $P_i$ is extremal. Then $F(\De)=(\pp^{k-1})^r$. On the other hand, if any of these relations is not extremal then $F$ does not admit any extremal contraction of fibre type. 
\end{proposition}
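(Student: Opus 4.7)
First I would observe the dichotomy: by vertex-transitivity, $\Aut(\De)$ acts transitively on $\mathcal{P}$ and preserves the fan, hence preserves extremality in $\NE(F)$; so either every $P_i$ is extremal or none is. This settles the second assertion immediately. By Lemma \ref{extremal_lemma} the $P_i$'s are all the primitive collections with focus $0$, and an extremal primitive contraction is of fibre type precisely when its focus is $0$. Thus if no $P_i$ is extremal, $F$ cannot admit any extremal contraction of fibre type.

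For the first assertion, assume all $P_i$ are extremal. My plan is to reconstruct $\Sigma$ as a product fan via Theorem \ref{sum}. The key step is the following claim: for every tuple $(j_1,\ldots,j_r)$, the union $\bigcup_{i=1}^r (P_i\setminus\{x_{j_i}^{(i)}\})$ generates a cone of $\Sigma$. Because $\sigma(P_i)=0$ and the $P_i$'s are pairwise disjoint (Lemma \ref{extremal_lemma}), Theorem \ref{sum} can be applied iteratively, starting from the trivial cone: at each stage the cone built so far lies in $\Sigma$ and is disjoint from the next $P_{i+1}$, so the hypotheses are met and the cone can be enlarged by any $k-1$ of the $k$ elements of $P_{i+1}$. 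This iterative application of Theorem \ref{sum} is the main technical point; the mutual disjointness of the $P_i$'s is what makes it go through.

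A dimension count then shows each such cone is maximal. Smoothness of $F$ forces its $r(k-1)$ generators to be $\mathbb{Z}$-linearly independent, giving $d \geq r(k-1)$. On the other hand, under the identification of $A_1(F)$ coming from \eqref{toricses1}, the cycle $r(P_i)$ corresponds to the tuple whose coordinate at $x$ is $1$ if $x\in P_i$ and $0$ otherwise; disjointness of the $P_i$'s then makes the classes $r(P_i)$ linearly independent in $A_1(F)$, so $\rho(F) \geq r$ and $d = m - \rho(F) \leq rk - r = r(k-1)$. Hence $d = r(k-1)$, $\rho(F)=r$, and each constructed cone is maximal. Any other maximal cone $\tau$ must arise this way, since $P_i \not\subseteq \tau$ forces $|\tau\cap P_i| \leq k-1$, and reaching $d=r(k-1)$ generators saturates each bound. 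Reading off a maximal cone factor-by-factor now gives $N = \bigoplus N_i$, with $N_i$ the sublattice generated by $P_i$, and $\Sigma = \prod \Sigma_i$, with each $\Sigma_i$ the standard fan of $\pp^{k-1}$ on $N_i$ (vertices summing to zero, maximal cones missing one vertex). This yields $F \cong (\pp^{k-1})^r$.
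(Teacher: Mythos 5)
Your proof is correct, but it follows a genuinely different route from the paper's. For the first assertion the paper does not reconstruct the fan: it first shows that the $P_i$'s are the \emph{only} primitive collections of $F$ (taking a hypothetical extra primitive collection $\tilde P$ of minimal cardinality meeting some $P_1$ and applying Proposition \ref{difference} with $\sigma(P_1)=0$ to produce a strictly smaller primitive collection not in $\mathcal P$, a contradiction), and then concludes by quoting Batyrev's structural result to get $\rho(F)=r$, $d=(k-1)r$, $i_F=k$, and Casagrande's proof of Mukai's conjecture for toric varieties to identify $F\cong(\pp^{k-1})^r$. You instead build all the cones $\bigl\langle \bigcup_i (P_i\setminus\{x_{j_i}\})\bigr\rangle$ by iterating the ``moreover'' part of Theorem \ref{sum} (legitimate here: $\sigma(P_i)=0$ kills the $y$'s, and disjointness of the $P_i$'s keeps the hypotheses satisfied at each stage), pin down $d=r(k-1)$ and $\rho=r$ by the two-sided count (smoothness for $d\ge r(k-1)$; linear independence of the indicator cycles $r(P_i)$ in $A_1$ for $\rho\ge r$), and then show these are exactly the maximal cones, splitting $N$ and $\Sigma$ as products. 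Your approach is more self-contained -- it avoids both external inputs and in particular the appeal to Mukai's conjecture -- at the cost of being longer; the paper's is shorter but leans on heavier citations. The second assertion is handled essentially identically in both (fibre-type extremal contractions force a primitive relation with trivial focus, and Lemma \ref{extremal_lemma} plus the $\Aut(\De)$-transitivity dichotomy rule this out). I see no gap in your argument; the only point worth making explicit is that $r(P_i)$ is the primitive lattice generator of its ray (it is the indicator vector of $P_i$ in $A_1\subset\zz^{V(\De)}$), which is what licenses the application of Theorem \ref{sum} to $P_i$ itself.
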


\begin{proof}
Up to reordering, assume that $P_1$ is extremal. Acting with $\Aut(\De)$, we deduce that all $P_i$'s are extremal. We claim these are the only primitive collections. In fact, let $\tilde P$ be a primitive collection such that $\tilde P \notin \mathcal P$ and $\tilde P$ has minimal cardinality among the primitive collections which are not in $\mathcal P$.  We may assume that $P_1 \cap \tilde P \ne \emptyset$. Using Proposition \ref{difference}, we deduce that the set $(\tilde P \setminus P_1)$ contains a primitive collection. Contradiction, since $|\tilde P|$ is minimal.   
\\Note that $k$ is the index of $K_F$, $\dim F=d=(k-1)r$ and $\rho(F)=r$ by \cite[Corollary 4.4]{Bat91}. So apply \cite[Theorem 1]{Cas06} (Mukai's conjecture) to obtain the first part of the statement.

For the last part, just observe that an extremal contraction of fibre type would provide a primitive collection $P$ with trivial focus $\sigma(P)=0$.
\end{proof}

\begin{lemma}\label{divisorial}
Let $F=F(\Delta)$ be a vertex-transitive, 2-neighbourly toric Fano manifold. Then there are no extremal relations of the form
\begin{equation}\label{div_2_neigh}
x_1+\ldots+x_k=by_1.
\end{equation}
In particular $F$ does not admit any extremal divisorial contraction.
\end{lemma}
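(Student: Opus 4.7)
I would argue by contradiction. Suppose that the extremal primitive relation $R\colon x_1+\ldots+x_k=by_1$ exists. The $2$-neighbourly hypothesis forces $k\ge 3$, while Fano positivity gives $k-b=\deg R>0$.

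First I would exploit vertex-transitivity to produce many translates of $R$. For every $\phi\in\Aut(\Delta)$, $\phi(R)$ is an extremal primitive relation with focus $\phi(y_1)$ and primitive collection $\phi(P)$. Adapting the uniqueness argument used in the proof of Lemma \ref{extremal_lemma}, the primitive collection in the orbit of $R$ is uniquely determined by the focus: two distinct primitive collections with the same focus would yield, after subtraction and cancellation, an identity $\sum_A z=\sum_B z$ with $|A|=|B|$, contradicting Remark \ref{rmk} since proper subsets of primitive collections span cones in $\Sigma$. This produces a family $\{R_v\colon\sum P_v=bv\}_{v\in V(\Delta)}$ of $m=|V(\Delta)|$ distinct extremal primitive relations in the $\Aut(\Delta)$-orbit of $R$. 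A double-counting argument shows that each vertex appears in exactly $k$ of the collections $P_v$, and summing $\sum_v R_v$ therefore yields $(k-b)\sum_v v=0$ in $N$; since $k>b$ one concludes $\sum_{v\in V(\Delta)}v=0$.

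Next I would establish the following disjointness: for every primitive collection $Q$ containing $y_1$, $P\cap Q=\emptyset$. Indeed, if $P\cap Q\ne\emptyset$, Proposition \ref{difference} applied to the extremal $P$ and $Q$ would imply that $(Q\setminus P)\cup\{y_1\}=Q\setminus P$ (since $y_1\in Q\setminus P$) contains a primitive collection, but this would be a proper primitive subset of $Q$, contradicting minimality. Applying this to the fiber-type primitive collection $P^0\ni y_1$ given by Lemma \ref{extremal_lemma} and to each of the $k$ extremal collections $P_v\ni y_1$ coming from the previous paragraph shows that $P$ is disjoint from all of them.

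The final step is to convert these disjointness relations, together with the identity $\sum_v v=0$, into an outright contradiction. I would pick some $v$ with $y_1\in P_v$ and write $P_v=\{y_1\}\cup T_v$; combining the relation $R_v\colon y_1+\sum T_v=bv$ with $R$ produces a two-sided relation between $P\cup T_v$ and $\{v\}$ in which the coefficient sum on the dominant side exceeds that on the other, so Remark \ref{rmk} forbids the dominant vertex set from spanning a cone in $\Sigma$. On the other hand, a careful application of Theorem \ref{sum} to $R$ and to $R_v$ -- using $2$-neighbourliness to supply a suitable auxiliary cone $\nu$ disjoint from $P\cup\{y_1\}$ -- forces the same vertex set to span a cone in $\Sigma$, a contradiction. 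The main obstacle will be precisely this last matching: choosing the right linear combination of primitive relations and the right $\nu$ in the hypothesis of Theorem \ref{sum} so that Remark \ref{rmk} and Theorem \ref{sum} conflict on the same vertex set requires delicate bookkeeping of the memberships of vertices in $P$, in $P_v$, and in their complements.
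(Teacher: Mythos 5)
Your setup (acting with $\Aut(\Delta)$ on the relation, double counting, the disjointness statement via Proposition \ref{difference}) is sound, but the proof is not complete: the "final step" where the actual contradiction is supposed to appear is only a plan, and as sketched it cannot work. Substituting $R$ into $R_v\colon y_1+\sum T_v=bv$ gives $\sum_i x_i+b\sum T_v=b^2v$, and Remark \ref{rmk} then tells you that $P\cup T_v$ does not span a cone of $\Sigma$ --- but this is vacuous, since $P\cup T_v$ contains the primitive collection $P$ and so could never span a cone. Symmetrically, Theorem \ref{sum} can never certify $P\cup T_v\in\Sigma$: its conclusion always \emph{omits} one element $x_i$ of the primitive collection (and adjoins the focus), so no choice of $\nu$ or of linear combination will produce the cone $P\cup T_v$. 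There is no tension between the two statements on that vertex set, hence no contradiction. (A smaller issue: your claim that the collection in the orbit is uniquely determined by its focus is not justified --- two \emph{disjoint} collections with the same focus give $\sum_A z=\sum_B z$ with $A=P$, and Remark \ref{rmk} then only reasserts that $P\notin\Sigma$. The identity $\sum_v v=0$ survives a cleaner double count, but you never actually use it.)

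The paper's proof extracts the contradiction from a different pair of relations: two relations in the $\Aut(\Delta)$-orbit whose \emph{collections overlap}, rather than a relation whose collection contains the focus $y_1$. The counting argument you already have forces such an overlap: if each vertex lay in exactly one collection of the orbit then $m=kr$, while the foci being equidistributed forces $m\mid r$, giving $k=1$, absurd. So one may take $x_1+x_2+\cdots+x_k=by_1$ and $x_1+z_2+\cdots+z_k=by_2$ with (say) $y_2\notin\{x_2,\ldots,x_k,y_1\}$. Subtracting yields the \emph{balanced} relation $by_2+x_2+\cdots+x_k=by_1+z_2+\cdots+z_k$, so Remark \ref{rmk} gives $\langle y_2,x_2,\ldots,x_k\rangle\notin\Sigma$; on the other hand Theorem \ref{sum} applied to the first relation with $\nu=\langle y_2\rangle$ (legitimate because $2$-neighbourliness gives $\langle y_1,y_2\rangle\in\Sigma$) yields $\langle x_2,\ldots,x_k,y_1,y_2\rangle\in\Sigma$. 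Here the set in question omits $x_1$, which is exactly what Theorem \ref{sum} can deliver and what Remark \ref{rmk} can forbid. This is the step your argument is missing.
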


\begin{proof}
Let us assume that an extremal relation of the form \eqref{div_2_neigh} exists and seek for contradiction. 
Let  $\mathcal R= \{ R_i \}_{1\le i \le r }$ be the set of extremal relations obtained acting with $\Aut(\De)$ and denote with $P_i$ the associated collections. 
Assume that $x_1$ appears only in one $P_i$. Since by transitivity any vertex appears the same number of times, we get that the $P_i$'s are disjoint. In particular $m=kr$, where $m=\#\{V(\De)\}$.
On the other hand, we have that $m$ divides $r$, because any vertex appears the same number of times as right hand side.
This implies $k=1$, which is a contradiction.

Hence there is an extremal primitive relation different from \eqref{div_2_neigh} of the form
$$
x_1+z_2\ldots+z_k=by_2.
$$

Assume $y_2 \notin \{x_2, \ldots,x_k,y_1\}$ (the other case is analogous). \\We have $ b_1y_2+x_2+\ldots +x_k=by_1 + z_2+\ldots+z_k$, and, since $\De$ is 2-neighbourly, we know that $\langle y_1,y_2 \rangle$ is a cone of $\Sigma$. Theorem \ref{sum} implies that $\langle y_2,x_2, \ldots,x_k\rangle \in \Sigma$, but this contradicts Remark \ref{rmk}.

\end{proof}

\subsection{Klyachko varieties}

Looking for interesting examples of vertex-transitive toric varieties, we found a generalisation of $t$-del Pezzo varieties, which were introduced in \cite{Kl83} and studied in \cite{VK85}.
\\Let us remark that our notation is not the same as Klyachko (cf. Remark \ref{confronto}). Fix a basis $e_1, \ldots, e_d$ of a lattice $N \cong \Z^d$, with $d\ge 2$ and let $k$ be a positive integers such that $(k-1)|d$.

\begin{definition}\label{skansen_var}
The \emph{Klyachko variety of order $k$ and dimension $d$} is the toric Fano variety $W^k_d$ with polytope $\Delta^k_d \subset N$ having vertices
\begin{align*}
V(\De^k_d)= \{& e_1,e_2, \ldots, e_d, e_1+\ldots +e_d, \\
              & -(e_1+\ldots+e_{k-1}), -(e_k+\ldots+e_{2k-2}), \ldots, -(e_{d-k+2} + \ldots +e_d), \\
              &  -(e_1 + e_k + \ldots + e_{d-k+2}), -(e_2 + e_{k+1} + \ldots + e_{d-k+3}), \ldots, \\ 
              &-(e_{k-1}+e_{2k-2}+\ldots + e_{d}) \}.
\end{align*}  
\end{definition}

\begin{rem}\label{confronto}
When $d$ is even, $W_d^2$ is the $t$-del Pezzo manifold $V_d$.
\\In \cite{VK85}, the varieties $W^k_d$ are introduced as $P_{m,n}$. The dictionary between the indices is:
$$d=(m-1)(n-1), \ \ \ k=m \ \mbox{(or \ $n$)}.$$
As we will see in Lemma \ref{skanseniso}, our definition of $k$ is consistent.
\end{rem}

If $W^k_d$ is smooth (cf. Proposition \ref{smoothskansen}), we can describe some birational geometry of Klyachko varieties.
\\The 1-dimensional cones of the fan of $W^k_d$ coincide with the 1-dimensional cones of the fan of the blow-up $Z^k_d$ of $(\mathbb{P}^{k-1})^{\frac{d}{k-1}}$ in $k$ invariant points. This implies that $W^k_d$ and $Z^k_d$ are isomorphic in codimension one and $W^k_d$ is a Fano model of $Z^k_d$ (cf. Section \ref{sec_fourfolds} for other examples of fibre-like Fano manifolds obtained as small modifications of blow-ups of projective spaces).

\begin{lemma}\label{skanseniso}
For any integers $d$ and $m$, $W^{d+1}_{md} = W^{m+1}_{md}$.
\end{lemma}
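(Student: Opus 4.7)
The plan is to exhibit an explicit lattice isomorphism that matches the two Fano polytopes vertex-by-vertex; since a toric Fano variety is recovered from the pair (lattice, set of vertices of the Fano polytope), this is enough.

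First, I reindex the basis of $N \cong \zz^{md}$ as a grid. Writing $e_{ij}$ for $i \in \{1,\dots,d\}$ and $j \in \{1,\dots,m\}$ via the row-major bijection $e_{(j-1)d + i} = e_{ij}$, the vertices of $\Delta^{d+1}_{md}$ (where $k-1 = d$, so the grid has $d$ columns and $m$ rows) become: the basis vectors $e_{ij}$; the single vector $\sum_{i,j} e_{ij}$; the $m$ row-sums $-\sum_{i} e_{ij}$, one for each $j \in \{1,\dots,m\}$; and the $d$ column-sums $-\sum_{j} e_{ij}$, one for each $i \in \{1,\dots,d\}$.

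Next, I do the analogous reindexing for $W^{m+1}_{md}$. Using a basis $\{f_{ji}\}$ with $j \in \{1,\dots,m\}$ and $i \in \{1,\dots,d\}$, now through $f_{(i-1)m + j} = f_{ji}$, the vertices of $\Delta^{m+1}_{md}$ (where $k-1 = m$, so the grid has $m$ columns and $d$ rows) become: the basis vectors $f_{ji}$; the single vector $\sum_{j,i} f_{ji}$; the $d$ row-sums $-\sum_{j} f_{ji}$, one for each $i$; and the $m$ column-sums $-\sum_{i} f_{ji}$, one for each $j$.

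I then define the ``transpose'' map $\varphi : N \to N'$ by $\varphi(e_{ij}) = f_{ji}$, which is manifestly a $\zz$-linear isomorphism of lattices. Comparing the two vertex lists, $\varphi$ sends basis vectors to basis vectors, the total sum to the total sum, the $m$ row-sums of $\Delta^{d+1}_{md}$ bijectively to the $m$ column-sums of $\Delta^{m+1}_{md}$, and the $d$ column-sums of $\Delta^{d+1}_{md}$ bijectively to the $d$ row-sums of $\Delta^{m+1}_{md}$. Hence $\varphi(\Delta^{d+1}_{md}) = \Delta^{m+1}_{md}$, and passing to spanning fans yields $W^{d+1}_{md} \cong W^{m+1}_{md}$.

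There is no real obstacle here: the entire content is the bookkeeping observation that the ``rows'' and ``columns'' of the defining grid play interchangeable roles, matching the symmetry $P_{m,n} = P_{n,m}$ of Klyachko's original notation recalled in Remark \ref{confronto}. The only care required is to keep the indexing conventions straight so that the transpose genuinely pairs up row-sums of one polytope with column-sums of the other.
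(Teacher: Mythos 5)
Your proof is correct and is essentially the paper's argument: the paper's explicit reindexing $e'_{mi+j}:=e_{d(j-1)+i+1}$ is precisely the grid-transposition $\varphi(e_{ij})=f_{ji}$ that you write out, and it likewise identifies the two vertex sets (basis vectors to basis vectors, total sum to total sum, consecutive blocks to strided sums and vice versa). Your version just makes the bookkeeping more transparent by naming the rows and columns.
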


\proof
Assume $m\le d$ and consider the vertices of $W^{d+1}_{md}$:
\begin{align*}
\{& e_1,e_2, \ldots, e_{md}, \ e_1+\ldots +e_{md}, \\
              & -(e_1+\ldots+e_{d}), -(e_{d+1}+\ldots+e_{2d}), \ldots, -(e_{m(d-1)+1} + \ldots +e_{md}), \\
              &  -(e_1 + e_{d+1} + \ldots + e_{m(d-1)+1}), -(e_2 + e_{d+2} + \ldots + e_{m(d-1)+2}), \ldots, \\ 
              &-(e_{d}+e_{2d}+\ldots + e_{md}) \}.
\end{align*}  
The following transformation
$$e'_{mi+j} := e_{d(j-1)+i+1}$$
where $i \in \{0,\ldots,d\}$ and $j\in \{1,\ldots,m-1\}$ gives the identification.

\endproof

We study now symmetries and singularities of Klyachko varieties.

\begin{lemma}
The Fano varieties $W_d^k$ are vertex-transitive, reflexive and have terminal singularities, for all $d,k$. 
\end{lemma}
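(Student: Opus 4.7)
The proof factors into three independent verifications, all stemming from a ``grid'' reorganisation of the vertex set.

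\textbf{Vertex-transitivity.} Set $s := d/(k-1)$ and arrange the $k(s+1)$ vertices of $\Delta_d^k$ into a $k \times (s+1)$ array $f_{p,q}$ as follows: for $p\le k-1$ and $q\le s$, put $f_{p,q} = e_{(q-1)(k-1)+p}$; writing $R_q := e_{(q-1)(k-1)+1}+\cdots+e_{q(k-1)}$ and $C_p := e_p + e_{p+(k-1)}+\cdots+e_{p+(s-1)(k-1)}$, set $f_{k,q}=-R_q$ for $q\le s$, $f_{p,s+1}=-C_p$ for $p\le k-1$, and $f_{k,s+1}=E=e_1+\cdots+e_d$. A short computation shows every row and every column of this grid sums to zero. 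I then realise each transposition of $S_k\times S_{s+1}$ acting on the grid by a lattice automorphism of $N$: transpositions within the first $k-1$ rows or first $s$ columns act by permuting the basis $\{e_i\}$, while for $(p_0,k)\in S_k$ the map $T$ defined by $T(e_{p,q})=e_{p,q}$ for $p\neq p_0$ and $T(e_{p_0,q})=-R_q$ is checked directly to permute $\{f_{p,q}\}$ by $f_{p,q}\mapsto f_{\sigma(p),q}$ and to have determinant $(-1)^s$, so lies in $\GL(N)$. The case $(q_0,s+1)\in S_{s+1}$ is analogous. Together these generate a transitive action of $S_k\times S_{s+1}$ on the grid, proving vertex-transitivity.

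\textbf{Reflexivity.} I need each facet of $\Delta_d^k$ to be at lattice distance one from the origin. Using the $S_k\times S_{s+1}$-symmetry, I reduce to a single representative of each facet-orbit, and write down an explicit supporting hyperplane with primitive integer normal (for illustration, the hyperplane $x_1+x_2+x_3-2x_4=1$ in $\Delta_4^3$ supports a facet containing $e_1,e_2,e_3,E,-R_2,-C_2$). Primitivity of the normal vector then gives lattice distance one directly.

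\textbf{Terminal singularities.} Given reflexivity, terminality is equivalent to $\Delta_d^k\cap N=\{0\}\cup V(\Delta_d^k)$. Any candidate non-vertex lattice point $v\in\Delta_d^k$ must satisfy all the facet inequalities $\langle u_F,v\rangle\le 1$ obtained above; combined with integrality, these restrict $v$ to a small explicit set, and the vertex-transitive symmetry then reduces the verification to a single neighbourhood, where a direct enumeration rules out extraneous lattice points.

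The principal obstacle is obtaining a clean combinatorial description of the facets of $\Delta_d^k$: the grid perspective makes the symmetry group transparent but not the facet structure. Once the facets are pinned down, both reflexivity and terminality reduce to routine checks.
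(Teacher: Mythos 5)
Your vertex-transitivity argument is complete and correct, and it takes a genuinely different (and more explicit) route than the paper. The $k\times(s+1)$ grid with vanishing row- and column-sums is precisely the Voskresenskii--Klyachko presentation of $W^k_d$ as $P_{k,s+1}$ (cf.\ Remark \ref{confronto}), and I verified that your maps $T$ do permute the grid by the intended row (resp.\ column) transposition and have determinant $\pm 1$, so they lie in $\GL(N)$ and preserve $\Delta^k_d$. This exhibits the full $S_k\times S_{s+1}$ symmetry in one stroke, whereas the paper runs an induction on $d$ via the projections away from the blocks $\langle e_{i+1},\dots,e_{i+k-1}\rangle$; as a bonus, transposing your grid reproves Lemma \ref{skanseniso}.

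The gap is in the reflexivity and terminality parts, and you have named it yourself: both arguments are conditional on ``pinning down the facets'' of $\Delta^k_d$, and that classification is never supplied. This is not a routine omission that symmetry disposes of. The facet structure of $\Delta^k_d$ varies with the arithmetic of $d$ and $k$ (compare Proposition \ref{smoothskansen}: the polytope is non-simplicial exactly when $\gcd(d-1,k)\neq 1$, and already the facets through $(1,\dots,1)$ come in several combinatorial types), so ``a single representative of each facet-orbit'' is not a finite check independent of $d$ and $k$, and your one worked example in $\Delta^3_4$ does not indicate what the general supporting functionals are, nor how many orbits there are. Likewise, the assertion that the facet inequalities ``restrict $v$ to a small explicit set'' is a statement of intent, not an argument. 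Note that the paper's proof sidesteps the facet problem entirely: for terminality it projects away from a block $\langle e_1,\dots,e_{k-1}\rangle$ not containing the putative extra lattice point, observes that the image is again a Klyachko polytope with the extra point surviving, and descends by induction to the centrally symmetric case $k=2$, which is terminal by Theorem \ref{veve}; for reflexivity it uses the transitive $\Aut(\Delta)$-action to reduce to a single hyperplane computation on the dual polytope. To complete your proof you must either actually list the facet normals of $\Delta^k_d$ for all admissible $d,k$, or replace the last two parts by a projection--induction argument of this kind.
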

\begin{proof}
Let us fix $k$ and observe that Lemma \ref{skanseniso} provides the following identification: $W_{k-1}^k \cong W_{k-1}^2$. 

Vertex-transitivity is proved by induction on $d$: assume that for any $(k-1)|d'$ and $d'<d$, the variety $W_{d'}^k$ is vertex-transitive. We write the projections 
$$\pi_i\colon \Delta_d^k \longrightarrow \langle e_{i+1},e_{i+2},\ldots,e_{i+k-1} \rangle^{\perp},$$ 
with $i=0,\ldots,d-k+1$.
By inductive hypothesis, the images via the $\pi_i$'s of $\De_d^k$ are vertex-transitive and they are all isomorphic to $\De_{d-k+1}^k$. To prove the transitivity for the whole polytope, we act with $\GL(N_\qq)$ to exchange the subspaces $\langle e_{i+1},e_{i+2},\ldots,e_{i+k-1} \rangle.$

\medskip
We write now $W=W_d^k$ and $\De=\De^k_d$ to simplify the notation and prove reflexivity. Look at the dual polytope $\De^* \subset M_\qq$: we claim that no lattice point lies between the affine hyperplane spanned by
the facets of $\De^*$ and its parallel through the origin. The claim holds for the hyperplane $\{x_1=-1\} \subset M_\qq$, so acting with $\Aut(\De)$ on $\De^*$ we conclude.

Terminality can be translated on polytopes with the condition
$$\De \cap N = V(\De) \cup \{0\}.$$ 
We assume there exists a non-zero $v \in \De \cap N$ which verifies $v \notin V(\De)$ and seek for contradiction. Without loss of generality, assume that $v$ is not in the subspace $H$ generated by $e_{1},\ldots, e_{k-1}$ and let $\pi_H$ be the projection from $H$. Then the image $\De_{H}:=\pi_H (\De)$ is a Klyachko polytope, $\pi_H(v) \in \De_{H}$ and $\pi_H(v) \notin V(\De_{H})\cup\{0\}$.
\\Since $W^2_d$ is terminal for $d \ge 2$, we obtain terminality by induction.
\end{proof} 

We analyse smoothness, together with $\Q$-factoriality, for Klyachko varieties. It turns out that these properties depend on some divisibility conditions on the indices $d$ and $k$ (cf. \cite{VK85}).

Let us fix some notation. For any positive $d$ and $k$ let $\overline{d}_k$ be the smallest non-negative integer $r$ which verifies $d \equiv r  \mod k$.

Fix integers $k\ge 2$ and $h\ge 1$ and let $\{x_1, \ldots, x_d\}$ be coordinates on $N_\Q$. Then define the following linear form on $N_\Q$:
$$
L_{k,h} := \sum_{i=0}^{k-3}(x_{h+ik} +x_{h+ik+1}+ \ldots +x_{h+ik+k-2}-(k-1)x_{h+ik+(k-1)}).
$$

The following proposition already appeared in \cite{VK85}, in a different notation.

\begin{proposition}\label{smoothskansen}
The Klyachko variety $W_{d}^k$ is smooth if $\gcd(d-1,k)=1$.
\\If $\gcd(d-1,k) \ne 1$, then $W_d^k$ is not $\Q$-factorial. 
\end{proposition}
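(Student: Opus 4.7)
My plan is to analyze the facets of the reflexive polytope $\Delta := \Delta^k_d$ directly. Recall that $W^k_d$ is $\mathbb{Q}$-factorial (respectively smooth) if and only if every facet of $\Delta$ is a simplex (respectively a simplex whose vertices form a $\mathbb{Z}$-basis of $N$). The strategy is to enumerate the facets of $\Delta$ and, for each, determine both the number of vertices and, when it is a simplex, the determinant of the associated lattice matrix. The vertex-transitivity of $\Aut(\Delta)$, together with the symmetry of Lemma \ref{skanseniso} exchanging $m := d/(k-1)$ with $k-1$, reduces the problem to a small number of orbit representatives.

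The main combinatorial tool is the linear form $L_{k,h}$ defined just before the proposition: appropriately normalized, these (together with their images under $\Aut(\Delta)$) serve as supporting functionals of the relevant facets of $\Delta$. I would evaluate $L_{k,h}$ on each vertex of $\Delta$, identify which vertices are tight on the corresponding supporting hyperplane, and thereby enumerate the vertices of the facet. A direct computation using the block structure of $V(\Delta^k_d)$ shows that the number of tight vertices equals $d$ precisely when $\gcd(d-1,k)=1$, and is strictly greater whenever $\gcd(d-1,k)>1$.

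For the smoothness direction, once the facet is seen to be a simplex (which is the case when $\gcd(d-1,k)=1$), a routine computation shows that the determinant of the $d\times d$ matrix formed by the generators equals $\pm 1$ under the same coprimality condition, so the facet is unimodular and $W^k_d$ is smooth. For the non-$\mathbb{Q}$-factoriality direction, if $g := \gcd(d-1,k)>1$ then the extra tight vertex produces a facet with at least $d+1$ vertices; this facet is not a simplex and so the fan fails to be simplicial, hence $W^k_d$ is not $\mathbb{Q}$-factorial.

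The main obstacle is the careful combinatorial bookkeeping required to extract the divisibility condition $\gcd(d-1,k)$ from the block structure of $\Delta$. While the strategy is transparent, one must separately analyze how the vertices of the form $e_j$, the anti-diagonal $e_1+\cdots+e_d$, and the two families of negative vertices each interact with a given supporting hyperplane, and then conclude by delicate counting. Equivalent calculations can be found in \cite{VK85}, where the result appears in slightly different notation.
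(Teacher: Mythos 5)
Your overall strategy---reading off smoothness and simpliciality from the facets of $\Delta^k_d$, with the forms $L_{k,h}$ as candidate supporting functionals---is the right starting point, and it is how the paper treats its base cases. But as written the proposal has a genuine gap: the two steps that actually produce the condition $\gcd(d-1,k)=1$ are asserted rather than carried out. First, vertex-transitivity of $\Aut(\Delta)$ does not give facet-transitivity, so you still have to classify all facets through a fixed vertex and justify that your list of supporting functionals is exhaustive; this matters because for the smoothness direction \emph{every} facet must be a unimodular simplex, and for general $d$ the supporting hyperplanes are not the bare forms $L_{k,h}$ but iterated combinations of them (in the paper they arise recursively as $\{P+L_{k,h+1}+(x_{d-k+1}+\cdots+x_{d-1}-(k-1)x_d)=1\}$). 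Second, the claim that ``the number of tight vertices equals $d$ precisely when $\gcd(d-1,k)=1$'' \emph{is} the content of Proposition~\ref{smoothskansen}; invoking an unexecuted ``direct computation'' (or pointing to equivalent calculations in \cite{VK85}) does not discharge it, and the same applies to the determinant computation in the smooth case.

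The paper avoids the general enumeration entirely by a double induction. It verifies non-simpliciality only at $d=(k-1)^2$ (where $\gcd(d-1,k)=k>1$ and the hyperplane $\{L_{k,1}+x_d=1\}$ supports a facet with $k(k-1)>d$ vertices) and smoothness only at $d=k(k-1)$ (where $\gcd(d-1,k)=1$ and every facet through $(1,\ldots,1)$ is forced to have a rigid coefficient pattern). General $(d,k)$ is then reduced to these cases: for $d<(k-1)^2$ one replaces $k$ using the identification $\Delta^k_d\cong\Delta^{l+1}_d$ of Lemma~\ref{skanseniso}, which preserves $\gcd(d-1,k)$; for $d>k(k-1)$ one projects onto $\Delta^k_{d-k(k-1)}$, which again preserves the gcd and transports (non-)simpliciality and smoothness of facets. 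If you want to keep a fully direct, non-inductive argument, you must actually perform the facet enumeration for arbitrary $d,k$, which is substantially more delicate than the proposal suggests; otherwise I would recommend restructuring your argument around the two special values of $d$ and the reduction maps above.
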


\begin{proof}
The polytope $\De:=\De^k_{d}$ is not simplicial for $d=(k-1)^2$, since the hyperplane $\{L_{k,1} + x_{d}=1\}$
supports a facet of $\De$ with $k(k-1)$ vertices.
\\On the other hand, we claim that the polytope $\De$ is smooth for $d=k(k-1)$. To show this, one can see that any facet of $\De$ containing the vertex $(1,1,\ldots,1)$ also contains at least $(k-1)(k-2)+1$ elements of the standard basis. This implies that the hyperplane
$$
\{a_1x_1 + \ldots + a_dx_d=1\}
$$ 
supporting the facet has (exactly as for the hyperplane $\{L_{k,1} + x_d=1\}$): 
\begin{itemize}
\item $(k-1)(k-2)+1$ coefficients equal to 1;
\item $k-2$ coefficients equal to $-(k-1)$;
\item $k-1$ coefficients equal to 0. 
\end{itemize}
One can verify that the vertices of all these facets give a basis of $N_\Q$. Using the transitivity of $\Aut(\De)$ we obtain the claim.

The general result on smoothness is proved via induction on $k$ and $d$. Two cases are easy: 
\begin{enumerate}
\item $k=2$ and any $d$; 
\item $d=2$.
\end{enumerate}

Take $\De_d^k$ with $k,d \ge 3$: if $d <(k-1)^2$ then $\De^k_d \cong \De_d^{l+1}$, where $l:=k-\overline{d}_k$ and $d=l(k-1)$. Since $\gcd(d-1,k)=\gcd(l+1,k)=\gcd(l+1,d-1)$, we conclude by induction of $k$. 

Assume now $d > k(k-1)$. Define $h:=d-k(k-1)$ and take the plane $H$ generated by $\{e_{h+1},e_{h+2}, \ldots, e_d\}$ with projection $\pi_H$. Let define $\De_H:=\pi_H(\De^k_d)$; then $\De_H=\De^k_{h}$ and $\gcd(d-1,k)=\gcd(h-1,k)$. For any facet $\mathcal{F}$ of $\De^k_{h}$  supported on the hyperplane $\{P(x_1, \ldots,x_{h})=1\}$ we get a facet $\mathcal{F}'$ of $\De^k_d$ supported on $\{P+L_{k,(h+1)}+(x_{d-k+1} + \ldots +x_{d-1}-(k-1)x_{d})=1\}.$
\\Observe that $|V(\mathcal{F}')|= |V(\mathcal{F})| + k(k-1)$. So if $\De^k_h$ is not simplicial, neither $\De^k_d$ is so. Analogously, one checks that $\De^k_d$ is smooth if and only if $\De^k_h$ is so. We conclude via induction on $d$.
\end{proof}

\begin{remark}
As a consequence of the previous proposition, if $k$ is a prime number then $W_d^k$ is smooth, unless $d \equiv 1 \mod k$.
\end{remark}

\subsection{Low dimension}

The results and the methods of the previous subsections are enough to classify all vertex-transitive Fano manifolds up to dimension 7. 
The result is confirmed by Table \ref{t1}, which collects the Fano toric manifolds up to dimension 8 which are fibre-like\footnote{The table appeared in \cite{CFST_16} 
and has been obtained using the software MAGMA together with the Graded Ring Database \cite{GRDB} (for further details on the classification, 
cf. \cite{Obro})}.

\begin{proposition}\label{classification_low}
Let $F=F(\De)$ be a $d$-dimensional vertex-transitive Fano manifold. If $d \le 7$, then $F$ is a power of projective spaces or Klyachko manifolds.
\end{proposition}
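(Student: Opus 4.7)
The plan is to combine the structural results already established with a case analysis enabled by the dimension bound $d\le 7$. By Lemma~\ref{product} we may write $F=F_{min}^n$ for a unique minimal vertex-transitive factor, and since a power of a projective space or of a Klyachko manifold already fits the conclusion, it suffices to treat the case $F=F_{min}$, i.e.\ to assume that $\Delta$ admits no non-trivial product decomposition.

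With this reduction in place, Proposition~\ref{2-neighbourly} splits the analysis in two. If $\Delta$ is not 2-neighbourly, then $F$ is $(\mathbb{P}^1)^d$ or a power of some $t$-del~Pezzo manifold $V_k$, and minimality forces $F=\mathbb{P}^1$ or $F=V_k=W_k^2$, both of which appear in the conclusion. If instead $\Delta$ is 2-neighbourly, Lemma~\ref{extremal_lemma} yields a partition of the vertex set into disjoint primitive collections $P_1,\ldots,P_r$ of common size $k\ge 3$ with focus $\sigma(P_i)=0$ and $m=rk$. When some (hence every, by vertex-transitivity) $P_i$ is extremal, Proposition~\ref{extremal} gives $F\cong(\mathbb{P}^{k-1})^r$, and minimality forces $r=1$ and $F=\mathbb{P}^{k-1}$. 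The remaining case is that $\Delta$ is 2-neighbourly, vertex-transitive, minimal, and no $P_i$ is extremal; then Proposition~\ref{extremal} together with Lemma~\ref{divisorial} implies that every extremal contraction of $F$ is small.

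At this point I would exploit the bound $d\le 7$. The constraints $k\ge 3$, $k\mid m$, $\rho(F)=m-d\ge 2$, and 2-neighbourliness leave only finitely many admissible triples $(d,k,r)$; for each of them, the classification of smooth toric Fano manifolds of dimension up to $7$ (available, e.g., through the Graded Ring Database, see Table~\ref{t1}) gives a short explicit list of candidate fans. For each candidate one then has to exhibit a $\mathrm{GL}(N_\qq)$-transformation sending its vertex set to the normal form of Definition~\ref{skansen_var}, thereby identifying the fan with a Klyachko polytope $\Delta_d^k$.

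The main obstacle I expect is precisely this last identification step: although the combinatorial constraints (only small extremal contractions, vertex-transitivity, 2-neighbourliness, and $m=rk$ with $k\ge 3$) are extremely restrictive, turning each surviving fan into the Klyachko normal form requires either a careful dimension-by-dimension combinatorial matching or a short computer-assisted check. The bound $d\le 7$ is crucial here, since it keeps the enumeration finite and makes direct verification feasible; I would not expect this method to extend easily to arbitrary dimensions without a genuinely new structural input.
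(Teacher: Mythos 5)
Your proposal is correct and follows essentially the same route as the paper: the structural reductions via Lemma~\ref{product}, Proposition~\ref{2-neighbourly}, Lemma~\ref{extremal_lemma}, Proposition~\ref{extremal} and Lemma~\ref{divisorial} are exactly the ones the paper uses in its hand argument for $d\le 4$, and for the residual cases in dimensions $5$--$7$ the paper, like you, falls back on a computer-assisted check against the Graded Ring Database (a MAGMA search for vertex-transitive smooth Fano polytopes, summarised in Table~\ref{t1}). The final identification of the surviving fans with Klyachko polytopes, which you flag as the main obstacle, is likewise handled in the paper only by this explicit enumeration.
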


\begin{proof}
The result can be proven using the software MAGMA together with the
classification of smooth toric Fano varieties from the Graded Ring
Database \cite{GRDB} (cf. Table \ref{t1}): giving as input a list of smooth Fano polytopes, 
MAGMA can check in which cases $\Aut(\Delta)$ acts transitively on the vertexes\footnote{We briefly describe the MAGMA code. Given an integer $i$, the function PolytopeSmoothFano($i$) gives the polytope of the $i$-th toric Fano in the Graded Ring Database; we denote by $N$ the number of Fano polytopes in the database; at the end of a run of the following code, the variable Pol will contain the list of vertex-transitive toric Fano polytopes in the Graded Ring Database.
\\Pol:=[**];
\\for $i:=1$ to $N$ do if $\#\{\{v * G$ : $G$ in AutomorphismGroup(PolytopeSmoothFano($i$))$\}$ : 
\\$v$ in Vertices(PolytopeSmoothFano($i$))$\}$
\\$-$Dimension(FixedSubspaceToPolyhedron(AutomorphismGroup(PolytopeSmoothFano($i$)))) eq 1  then  Pol:=Append(Pol,PolytopeSmoothFano($i$));
\\end if; end for;
}.
\\When the dimension is at most 4, we are able to provide the
following short argument, which does not require computer computations.

Let us start with $d=2$. If $\De$ is not 2-neighbourly, then Proposition \ref{2-neighbourly} implies that $F \cong \pp^1 \times \pp^1$ or $F \cong V_2$. 
If $\De$ is 2-neighbourly, then there is an extremal collection $P=\{x_1,x_2,x_3\}$ for which $\sigma(P)=0$ and so, by Proposition \ref{extremal}, we have $F\cong \pp^2$.

Assume now $d=3$. If $\De$ is not 2-neighbourly, then Proposition \ref{2-neighbourly} implies that $F \cong (\pp^1)^3$.
If $\De$ is 2-neighbourly, then the extremal relations could only be of the form $x_1+x_2+x_3=0$ (contradiction by Proposition \ref{extremal}), $x_1+x_2+x_3=y_1$ (contradiction by Lemma \ref{divisorial}) or $x_1+x_2+x_3+x_4=0$. In this last case, $F \cong \pp^3$ by Proposition \ref{extremal}. 

Assume finally that $d=4$. 
If $\De$ is not 2-neighbourly, then by Proposition \ref{2-neighbourly} we get $X \cong (\pp^1)^4$, $X\cong (V_2)^2$ or $X \cong V_4$.
If there is an extremal relation of the form $x_1+x_2+x_3=0$, then by Proposition \ref{extremal} we have  $X\cong \pp^2 \times \pp^2$.

Hence assume that $\De$ is 2-neighbourly and let $P$ be an extremal primitive collection. By Theorem \ref{sum} we have $|P|+|\sigma(P)| \le 5$. By Lemma \ref{divisorial} we conclude that there is an extremal relation of the form $x_1+x_2+x_3=y_1+y_2$ or $x_1+\ldots+x_5=0$. In the second case $X \cong \pp^4$ and so we can assume to have $x_1+x_2+x_3=y_1+y_2$.
From here it is not difficult to see that one should have $|V(\De)| \ge 12$, which is impossible by \cite[Theorem 1]{Cas06}.
\end{proof}

\begin{table}[b]
\begin{center}
\begin{tabular}{*{4}{c}}
Dimension &  \# Vertices & Description  & ID\\
\hline
 $2$  & $6$ & $V_2$  & $2$  \\
 2 &  4 & $\pp^1 \times \pp^1$  & 4  \\
  2 &  3 & $\pp^2$  & 5 \\
\hline
 3 & 6 & $(\pp^1)^3$  & 21   \\
 3 & 4 & $\pp^3$ &  23     \\
\hline
4 &  10 & $V_4$  & 63   \\
4 & 12 & $V_2 \times V_2$  & 100 \\
4 & 8 & $(\pp^1)^4$  & 142  \\
4 & 6 & $\pp^2 \times \pp^2$  & 146    \\
4 & 5 & $\pp^4$  & 147    \\
\hline
5 & 10 & $(\pp^1)^5$  & 1003    \\
5 & 6 & $\pp^5$  & 1013    \\
\hline
6 &  14 & $V_6$  & 1930   \\
6 & 12 & $W_6^3$  & 5817 \\
6 & 18 & $(V_2)^3$  & 7568  \\
6 & 12 & $(\pp^1)^6$  & 8611  \\
6 & 9 & $(\pp^2)^3$  & 8631    \\
6 & 8 & $(\pp^3)^2$  & 8634    \\
6 & 7 & $\pp^6$  & 8635    \\
\hline
7 & 14 & $(\pp^1)^7$  & 80835    \\
7 & 8 & $\pp^7$  & 80891   \\
\hline
8 & 18 & $V_8$  & 106303   \\
8 & 15 & $W_8^3$  & 277415  \\
8 & 20 & $(V_4)^2$  & 442179  \\
8 & 24 & $(V_2)^4$  & 790981  \\
8 & 12 & $\tilde{W}$  & 830429    \\
8 & 16 & $(\pp^1)^8$	 & 830635    \\
8 & 12 & $(\pp^2)^4$  & 830767    \\
8 & 10 & $(\pp^4)^2$  & 830782    \\
8 & 9 & $\pp^8$  & 830783    \\
\end{tabular}
\end{center}
\caption{Toric Fano manifolds of dimension at most $8$ that are fibre-like (the entry of the last column is the ID number in the Graded Ring Database \cite{GRDB}).}
\label{t1}
\end{table}

\begin{rem}\label{super_skansen}
The 8-dimensional polytope denoted by $\tilde{W}$ in Table \ref{t1} is not a Klyachko variety and we do not have a classic description of it. 
\end{rem}


\section{Fano manifolds of high index}
\label{sec_index}

An important invariant of a Fano manifold $F$ is its index, $i_F$, defined as the largest integer that divides $-K_F$ in $\Pic(F)$. 
There is a complete classification of Fano manifolds with index $i_F \ge n-2$ and $i_F \ge (n+1)/2$.
In this section we investigate the fibre-likeness of these varieties, assuming $n\ge 4$ (fibre-like Fano 3-folds have been classified in \cite{CFST_16}).

In \cite{Wis_91}, Wi\'sniewski classified Fano manifolds with index $i_F \ge (n+1)/2$. 
Let us denote by $Q^{j}\subset\pp^{j+1}$ the $j$-dimensional smooth projective quadric and by $T_{\pp^l}$ the tangent bundle of $\pp^l$.

\begin{theorem}\cite{Wis_91}\label{wis thm}
Let $F$ be a Fano manifold of dimension $n$ and index $i_F \ge (n+1)/2$. Then $F$ verifies one of the following:
\begin{enumerate}[(i)]
\item $\rho(F)=1$;
\item $n$ is even and $F\simeq \pp^{\frac{n}{2}}\times\pp^{\frac{n}{2}}$;
\item $n$ is odd and $F\simeq \pp^{\frac{n-1}{2}}\times Q^{\frac{n+1}{2}}$;
\item $n$ is odd and $F\simeq \pp\Big(T_{\pp^{\frac{n+1}{2}}}\Big)$; 
\item $n$ is odd, $\oo=\oo_{\pp^{\frac{n+1}{2}}}$ and $F\simeq \pp\Big(\oo(1)\oplus \oo^{\frac{n-1}{2}}\Big)$.
\end{enumerate}
\end{theorem}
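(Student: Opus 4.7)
The plan is to follow Wi\'sniewski's original strategy: extract information about $F$ from its extremal contractions using the length of extremal rays and the fibre-dimension inequality. We may assume $\rho(F)\geq 2$, otherwise we are in case (i).

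Pick an extremal ray $R\subset\NE(F)$ and let $\varphi_R\colon F\to Y$ be its contraction. Write $\ell(R)=\min\{-K_F\cdot C\mid C\text{ rational, }[C]\in R\}$. Since $-K_F=i_F\,L$ for some ample $L$ and $L\cdot C\ge 1$ for every curve, we have $\ell(R)\geq i_F\geq (n+1)/2$. The key input is Ionescu--Wi\'sniewski's inequality: for any non-trivial fibre $G$ of $\varphi_R$ contained in the exceptional locus $E$ of $\varphi_R$, one has
\[
\dim G+\dim E\;\geq\;\dim F+\ell(R)-1\;\geq\;\tfrac{3n-1}{2}.
\]
I would first use this to rule out small and divisorial contractions. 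If $\varphi_R$ is small then $\dim E\leq n-2$ and $\dim G\leq \dim E$, so $2(n-2)\geq (3n-1)/2$ forces $n\geq 7$; a finer analysis (using that $E$ must be swept out by rational curves of very low anticanonical degree) excludes this case. If $\varphi_R$ is divisorial, then $\dim E=n-1$ and $\dim G\geq (n+1)/2$, and Wi\'sniewski's classification of divisorial contractions of large length shows that no such contraction exists when $\ell(R)\geq (n+1)/2$ and $\rho(F)\geq 2$.

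Hence every extremal contraction of $F$ is of fibre type. For such a $\varphi_R\colon F\to Y$ the inequality gives $\dim G\geq (n-1)/2$ for every fibre, in particular $\dim Y\leq (n+1)/2$. A general fibre $G$ is a smooth Fano manifold of dimension $\dim G\geq (n-1)/2$ whose index is at least $i_F\geq (n+1)/2\geq \dim G$; by the Kobayashi--Ochiai theorem, $G$ is isomorphic to $\pp^{\dim G}$ or to a smooth quadric $Q^{\dim G}$ (when $i_G=\dim G$). I would then upgrade general fibres to all fibres by a standard deformation/semicontinuity argument (no jumping of $-K_F$-degree of lines is allowed because of the index bound), concluding that $\varphi_R$ is a projective bundle or a quadric bundle, and in fact trivial over the base in the projective-bundle case by studying rigidity of uniform bundles on the targets appearing below.

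Since $\rho(F)\geq 2$ there is a second extremal fibre-type contraction $\varphi_{R'}\colon F\to Y'$. Because both have fibres of dimension $\geq (n-1)/2$, a general fibre of $\varphi_R$ and a general fibre of $\varphi_{R'}$ meet properly unless they coincide, giving strong geometric constraints. Comparing the two contractions, together with the possible shapes of the general fibres (projective spaces or quadrics), narrows $F$ to one of the listed forms:
\begin{itemize}
\item two $\pp^{n/2}$-bundle structures yield $F\cong \pp^{n/2}\times\pp^{n/2}$ (case (ii), $n$ even);
\item a $\pp^{(n-1)/2}$-bundle together with a $Q^{(n+1)/2}$-bundle structure yields case (iii);
\item two $\pp^{(n-1)/2}$-bundle structures with the same base $\pp^{(n+1)/2}$ lead, via Sato's classification of uniform bundles on projective spaces of rank less than the dimension, to either $\pp(T_{\pp^{(n+1)/2}})$ (case (iv)) or $\pp(\oo(1)\oplus\oo^{(n-1)/2})$ (case (v)).
\end{itemize}

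The main obstacle is the last paragraph: once the two fibre-type contractions are in hand, one has to recognise $F$ from the two bundle structures. This requires a careful Chern-class computation together with a uniformity result for the bundle $\varphi_{R'_*}\,\varphi_{R}^{*}\oo(1)$ (or the analogous quadric-bundle analysis), which is really where the rigidity of low-rank uniform bundles on $\pp^k$ does the heavy lifting. Excluding small contractions in the borderline range $n=7,8$ is the other delicate technical point; both steps are standard in the MMP/extremal-ray literature but require genuine work beyond the numerical inequalities stated above.
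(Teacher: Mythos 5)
This theorem is quoted from \cite{Wis_91}: the paper supplies no proof of it, so there is no internal argument to measure your write-up against. Your sketch does follow the strategy of Wi\'sniewski's original proof --- bound the length $\ell(R)$ from below by the index, use the Ionescu--Wi\'sniewski inequality $\dim G+\dim E\ge n+\ell(R)-1$ to eliminate birational contractions, conclude that every extremal contraction is of fibre type with fibres of dimension at least $\ell(R)-1$, identify the general fibre via Kobayashi--Ochiai, and then play two fibre-type contractions against each other. The skeleton is the right one.

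As a proof, however, the text has genuine gaps, two of which you flag but which are more than ``standard'' technicalities. First, the exclusion of divisorial contractions is circular as written: you invoke ``Wi\'sniewski's classification of divisorial contractions of large length'', which is part of the very package being proved. The inequality only gives $\dim G\ge \ell(R)\ge (n+1)/2$ for a fibre inside the exceptional divisor $E$; deriving a contradiction from this requires an actual analysis of $E$ (a divisor covered by rational curves of low anticanonical degree) together with $\rho(F)\ge 2$, and that argument has to be carried out rather than cited. Second, the passage from ``a general fibre is $\pp^k$ or $Q^k$'' to ``$\varphi_R$ is a projective or quadric bundle, trivial over the base in the relevant cases'' needs equidimensionality of $\varphi_R$ (which does follow from the fibre-dimension inequality plus semicontinuity), Fujita's or Ionescu's characterisation of scrolls and quadric fibrations, and then the rigidity of low-rank uniform bundles on the base; each is a citable but substantive input, and the final splitting-type computation distinguishing cases (iv) and (v) is left entirely implicit. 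None of these is a wrong turn --- they constitute the body of \cite{Wis_91} --- but what you have written is an outline of that paper rather than a self-contained proof.
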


Let us note that in case (iv) (resp. (v)) of the above theorem $F$ can be alternatively described as a smooth divisor of degree $(1,1)$ in $\pp^{\frac{n+1}{2}}\times\pp^{\frac{n+1}{2}}$
(resp. as the blow-up of $\pp^n$ along a linear $\pp^{\frac{n-3}{2}}$).

Looking at the above list, we are able to classify fibre-like Fano manifolds with high index.

\begin{proposition}\label{wis index}
Let $F$ be a fibre-like Fano manifold of dimension $n\ge 4$, index $i_F \ge (n+1)/2$ and $\rho(F)>1$. \newline
If $n$ is even then $F$ is isomorphic to $F\simeq \pp^{\frac{n}{2}}\times\pp^{\frac{n}{2}}$.\newline
If $n$ is odd then $F$ is isomorphic to  $F\simeq \pp\Big(T_{\pp^{\frac{n+1}{2}}}\Big)$.
\end{proposition}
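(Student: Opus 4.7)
The plan is to invoke Wi\'sniewski's classification (Theorem \ref{wis thm}) and then, case by case, decide fibre-likeness using the criteria from \cite{CFST_16}: for the positive cases I would verify the sufficient condition $\NS(F)_\qq^{\Aut(F)}=\qq K_F$ from Theorem \ref{suff_criterion}, and for the negative cases I would exploit the necessary condition $\NS(F)_\qq^{\Mon(F)}=\qq K_F$ coming from the monodromy, ruling out the remaining items. Since $\rho(F)>1$ by hypothesis, case (i) of Theorem \ref{wis thm} is immediately excluded, and we are left with cases (ii)--(v) to analyse.

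For case (ii), $F\simeq \pp^{n/2}\times\pp^{n/2}$, the group $\Aut(F)$ contains the involution swapping the two factors, and $\NS(F)_\qq=\qq H_1\oplus\qq H_2$ with $-K_F$ proportional to $H_1+H_2$; the swap shows that the invariant subspace is exactly $\qq K_F$, so the sufficient condition applies. For case (iv), $F=\pp(T_{\pp^m})$ with $m=(n+1)/2$ is realised as a smooth $(1,1)$-divisor in $\pp^m\times\pp^m$, and one can choose a symmetric defining equation so that the involution swapping the two factors of $\pp^m\times\pp^m$ restricts to $F$; the adjunction formula then gives $-K_F = m(H_1+H_2)_{|F}$, and the involution again forces $\NS(F)_\qq^{\Aut(F)}=\qq K_F$. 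Both varieties are therefore fibre-like.

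For the negative cases I plan to argue through the birational geometry of the extremal contractions, which is preserved by $\Mon(F)$. In case (iii), $F\simeq\pp^{(n-1)/2}\times Q^{(n+1)/2}$, the two projections are contractions of fibre type whose fibres are non-isomorphic (they have different dimensions and only one is a projective space), so any element of $\Mon(F)\subset\GL(\NS(F),\zz)$ must fix each of the two extremal rays of $\NE(F)$; acting on a lattice by fixing each ray of a two-dimensional cone forces the action to be trivial, whence $\NS(F)_\qq^{\Mon(F)}=\NS(F)_\qq$ has rank two and $F$ is not fibre-like. In case (v), $F=\pp(\oo(1)\oplus\oo^{(n-1)/2})$ is the blow-up of $\pp^n$ along a linear $\pp^{(n-3)/2}$, and the two extremal contractions are of completely different nature (one is a $\pp^{(n-1)/2}$-bundle structure, the other a divisorial contraction to a smooth variety of smaller dimension); by the same rigidity argument, $\Mon(F)$ acts trivially on $\NS(F)$ and the necessary condition fails.

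The main obstacle is the monodromy analysis in cases (iii) and (v): strictly speaking one has to argue that the asymmetry of the two Mori contractions is encoded in the birational data preserved by $\Mon(F)$, so that the two extremal rays of $\NE(F)$ cannot be exchanged. This is essentially the content of the arguments in \cite[\S 3]{CFST_16} on how $\Mon(F)$ must permute the extremal contractions, and I would spell out that permuting the two rays would require an isomorphism between the two corresponding contractions, which is impossible by dimension or contraction-type reasons. The verification of the sufficient condition in (ii) and (iv) is by contrast a routine check once the correct involution is exhibited.
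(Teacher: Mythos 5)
Your proposal is correct and follows essentially the same route as the paper: Wi\'sniewski's classification plus the sufficient criterion (via the factor-swapping involution) for cases (ii) and (iv), and the monodromy/necessary condition for (iii) and (v). The paper merely packages your arguments by citing Corollary \ref{contraction} (the two contraction targets must be deformation equivalent, which fails by dimension count in (iii) and (v)) and Corollary \ref{complete_intersections} (for the $(1,1)$-divisor in case (iv)), rather than re-deriving the ray-rigidity and the symmetric-equation argument inline as you do.
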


\begin{proof}
Use Corollary \ref{contraction} to show that cases $(iii)$ and $(v)$ are not fibre-like: $(iii)$ is clear, while $(v)$ comes with a divisorial contraction to $\pp^n$ and a fibration to $\pp^{\frac{n+1}{2}}$. Fibre-likeness of $(ii)$ is a consequence of Theorem \ref{suff_criterion}, where $G=\zz/2\zz$ exchanges the two factors, and case $(iv)$ follows by Corollary \ref{complete_intersections}.
\end{proof}

In \cite{KO_73} Kobayashi and Ochai proved that $i_F \le n+1$, where $n=\dim X$ and equality holds if and only if $F \cong \mathbb P^n$. 
They also showed that $i_F = n$ if and only if $F$ is a quadric hypersurface.
Fano manifolds with index $n-1$ are called \emph{del Pezzo manifolds} and they have been classified by Fujita \cite{F_82a} and \cite{F_82b}, 
while Fano manifolds with index $n-2$ are called \emph{Mukai manifolds} and their classification appeared in \cite{M_89}. 
In dimension $n\ge 3$, del Pezzo manifolds have index $i_F \ge (n+1)/2$ and they have already been studied in Proposition \ref{wis index}. 
For $n\ge 5$, also Mukai manifolds are included in Wi\'sniewski's list, so we only need to study the case $n=4$, $i_F=2$ (see \cite[Table 12.7 ]{AG_99} for the complete list).

\begin{proposition}\label{mukai}
Let $F$ be a $4$-dimensional fibre-like Fano manifold of index $i_F=2$ and $\rho(F)>1$. Then $F$ is isomorphic to one of the following:
\begin{enumerate}[(i)]
\item a double cover of $\pp^2 \times \pp^2$ branched along a degree--$(2,2)$ divisor;
\item an intersection of two degree--$(1,1)$ divisors in $\pp^3\times\pp^3$;
\item $\pp^1\times\pp^1\times\pp^1\times\pp^1$.
\end{enumerate}
\end{proposition}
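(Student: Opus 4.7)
The plan is to combine the Mukai--Wi\'sniewski classification of index-$2$ Fano fourfolds with Picard number at least $2$ (see \cite[Table 12.7]{AG_99}) with the criteria recalled in the introduction: the necessary condition Corollary \ref{contraction} and the sufficient condition Theorem \ref{suff_criterion}. Besides the three items (i)--(iii), that table contains products of the form $\pp^1\times B$ (with $B$ a del Pezzo threefold of degree $d\in\{1,\ldots,5\}$ or $B=\pp^3$), together with a handful of further entries arising as projective bundles over $\pp^3$, divisors in homogeneous fourfolds, or blow-ups. One therefore goes through the list item by item.

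Step 1 (elimination). For each entry that does not match one of (i)--(iii), exhibit two distinct elementary Mori contractions of $F$ whose targets are not deformation equivalent; Corollary \ref{contraction} then forces $F$ not to be fibre-like. The archetypal case is $F=\pp^1\times B$: the two Mori rays are the projections $F\to\pp^1$ and $F\to B$, and the targets have manifestly different dimensions. The same pattern rules out $\pp^1\times\pp^3$, the various projective-bundle entries (one contraction is the bundle projection, the other contracts onto a variety of a different deformation type), and the remaining mixed cases.

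Step 2 (construction). For each of the three surviving candidates I would verify fibre-likeness directly:
\begin{itemize}
\item For $F=(\pp^1)^4$, Theorem \ref{suff_criterion} applies: the subgroup $S_4\subset\Aut(F)$ permuting the four $\pp^1$-factors acts on $\NS(F)_\qq\cong\qq^4$ by coordinate permutation, whose invariant subspace is the diagonal $\qq(1,1,1,1)=\qq K_F$.
\item For $F$ an intersection of two divisors of bidegree $(1,1)$ in $\pp^3\times\pp^3$, Corollary \ref{complete_intersections} applies directly with $n=3$, $r=2$, $k=2$, $d=1$ (indeed $kd=2<n+1=4$).
\item For $\pi\colon F\to\pp^2\times\pp^2$ a double cover branched along a smooth $(2,2)$-divisor $B$ which is invariant under the swap of factors, Lefschetz yields $\NS(F)_\qq=\qq\pi^*H_1\oplus\qq\pi^*H_2$; the deck involution acts trivially, while the lift of the swap exchanges the two generators, leaving the line $\qq(\pi^*H_1+\pi^*H_2)=\qq K_F$ as sole invariant subspace, so Theorem \ref{suff_criterion} applies.
\end{itemize}

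The main obstacle will be the systematic elimination in Step 1 for those subtler entries of \cite[Table 12.7]{AG_99} where the two Mori contractions happen to land on Fano varieties of the same deformation type (for instance two different $\pp^3$-targets, or two projections of the same bundle shape). In such cases Corollary \ref{contraction} is inconclusive and one must instead invoke the stronger necessary monodromy condition $\NS(F)_\qq^{\Mon(F)}=\qq K_F$ from \cite{CFST_16}: the monodromy is forced to preserve a canonical piece of structure on $F$ (a privileged subbundle, projection, or marked divisor class), which produces a $\Mon(F)$-invariant class linearly independent from $-K_F$ and hence a rank-$2$ invariant subspace of $\NS(F)_\qq$, contradicting fibre-likeness.
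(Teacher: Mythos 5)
Your overall strategy coincides with the paper's: run through the eighteen families of \cite[Table 12.7]{AG_99}, eliminate the non-candidates with Corollary \ref{contraction}, and then certify the three survivors. The eliminations and the treatments of $(\pp^1)^4$ (via the $S_4$-action and Theorem \ref{suff_criterion}) and of the $(1,1)$--$(1,1)$ complete intersection (via Corollary \ref{complete_intersections}) are exactly the paper's arguments. Two points of divergence are worth recording. First, your worry about entries where both extremal contractions land on deformation-equivalent targets, and your proposed retreat to the monodromy condition, turn out to be unnecessary: in the actual table the only delicate entry is the projectivised null-correlation bundle over $\pp^3$, whose two rays both give fibrations, but the paper observes (citing \cite[Proposition 3.4]{SzW_90}) that the second fibration maps onto $Q^3$ rather than $\pp^3$, so Corollary \ref{contraction} still applies; your fallback argument, as sketched (``the monodromy is forced to preserve a privileged subbundle''), is too vague to count as a proof and would need to be made precise if it were ever actually needed. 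Second, for the double cover of $\pp^2\times\pp^2$ your argument via Theorem \ref{suff_criterion} only applies to the special members whose branch divisor is invariant under the swap of factors, since for a general $(2,2)$ branch divisor the swap does not lift to an automorphism of $F$. The paper instead realises $F$ as a member of the linear system $|2H_1+2H_2|$ on a toric ambient variety $Z$ carrying a $\zz/2\zz$-action with $\dim\NS(Z)_\qq^{\zz/2\zz}=1$ and invokes \cite[Theorem 4.5]{CFST_16}, which yields fibre-likeness for the general member of the family without requiring the member itself to be symmetric: the relevant symmetry is absorbed into the monodromy of the family rather than into $\Aut(F)$. Your symmetric example does exhibit a fibre-like member of family (i), which suffices for the literal statement of the proposition, but the ambient-variety argument is the stronger and is the one the paper uses.
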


\begin{proof}
We follow the enumeration in \cite[Table 12.7 ]{AG_99} of the 18 families. Applying Corollary \ref{contraction} to the cases 
$$(1), (2), (3), (5), (6), (8), (9), (14), (16) \mbox{ and } (17)$$
we immediately see that they are not fibre-like. Cases
$$(10) \mbox{ and } (12)$$
are also not fibre-like, since they are obtained as a blow-up of $Q^4$ but come with a $2$-dimensional fibration over $\pp^{n-2}$. Cases
$$(13) \mbox{ and } (15)$$
are $\pp^1$-bundles over $\pp^3$ or $Q^3$ and the other ray of the nef cone corresponds to the contraction of the section. The case which requires more care is 
$(11)$, in which case $F$ is isomorphic to the projectivisation of the null-correlation bundle over $\pp^3$. 
Although the two extremal rays of the nef cone of $F$ both yield fibrations, 
the image of the fibration associated to the ray not inducing the bundle structure is the quadric $Q^3$, see \cite[Proposition 3.4]{SzW_90}. 
Hence, $F$ is not fibre-like. \newline
We now prove fibre-likeness for the remaining cases. Case $(18)$ (corresponding to case $(iii)$ in our list), 
has an action of $S_4$ and is clearly fibre-like because of Theorem \ref{suff_criterion}. 
For Case $(7)$ (corresponding to case $(ii)$), we can directly apply Corollary \ref{complete_intersections}. 
Let us analyse now the case $(4)$ (corresponding to case $(i)$ in our list): it is obtained as a member of the linear system $|2H_1+2H_2|$ in the toric variety $Z$ with weight data
\begin{center}
\vspace{.2cm}
{
\begin{tabular}{*{7}{c} | *{1}{c} }\label{toric_data}
$x_0$ & $x_1$ & $x_2$  & $y_0$ & $y_1$ & $y_2$ & $z$ & \\
\hline
1 & 1 & 1 & 0 & 0 & 0 & 1 & $H_1$\\
0 & 0 & 0 & 1 & 1 & 1 & 1 & $H_2$\\
\end{tabular}.
}
\end{center}
Since $Z$ comes with a $\zz/2\zz$-action exchanging the divisors $H_1$ and $H_2$ we have $\dim \NS(Z)_\qq^{\zz/2\zz}=1$ 
and we can apply \cite[Theorem 4.5]{CFST_16} to conclude that case $(i)$ is fibre-like. 
\end{proof}


\section{Fano manifolds with high Picard number}
\label{sec_fourfolds}

Fano manifolds of a given dimension form a bounded family, so their Picard number is bounded. 
In spite of this boundeness result, their classification in dimension at least four is an open and rather difficult problem. 
A first step towards such classification would be to identify an effective bound on the Picard number of those Fano manifolds 
that are not a product of lower dimensional manifolds. Already this simpler problem is actually quite difficult. 
So far, the only known examples of families of Fano manifolds of dimension $n\geq 4$,  which are not product and have Picard number at least $n+4$ are
\begin{enumerate}[(i)]
\item a birational model of the blow up of $\mathbb{P}^n$ in $n+3$ points in general position, with $n\ge4$ and even;
\item a birational model of the blow up of $\mathbb{P}^4$ in $8$ points in general position.
\end{enumerate}
The first family appears in any even dimension, the second example is sporadic. 
In this section we are going to show that all these examples are fibre-like. 
The first family with $n$ odd gives non-$\qq$-factorial Fano varieties of Picard rank $1$, 
see \cite[page 3029]{AC17}. 
We do not know if there exists a general connection between having high Picard number and being fibre-like. 

The sporadic example is discussed in \cite{CCF17}. 
There the authors prove, following \cite{M05}, that the Fano manifold under investigation is 
isomorphic to the moduli space of rank two vector bundles on a del Pezzo surface of degree one. 
Varying the stability conditions, the authors can explicitly describe the birational geometry of the Fano manifolds. 
Thanks to this analysis, it is possible to describe the automorphism group of the manifold, and to show the following result. 

\begin{theorem}\cite[Proposition 6.22]{CCF17}\label{thm_CCF}
The Fano model of $\pp^4$ blown-up in 8 points in general position is fibre-like.
\end{theorem}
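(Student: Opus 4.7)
The plan is to exploit the moduli-theoretic description of $F$ recalled just above the statement and to apply the sufficient criterion of Theorem \ref{suff_criterion}, so it suffices to prove that $\NS(F)_\qq^{\Aut(F)} = \qq K_F$. Let $S$ be a general del~Pezzo surface of degree one; following \cite{M05} and the analysis in \cite{CCF17}, $F$ is isomorphic to a moduli space $M_S$ of Gieseker-stable rank-two vector bundles on $S$ with fixed determinant and appropriate second Chern class. This identification converts a question about a birational model of the blow-up $\mathrm{Bl}_8\pp^4$ into a question about a moduli space whose geometry is governed by $\Pic(S)$.

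First, I would set up the lattice correspondence between $\NS(F)$ and $\Pic(S) \cong \zz^9$. In the relevant rank and Chern-class regime, the Donaldson--Mukai homomorphism $\mu\colon \Pic(S) \to \NS(M_S)$ yields (up to a rational twist) an isomorphism of Néron-Severi groups, and is equivariant with respect to the natural action of the Weyl group $W(E_8) \subset \Aut(\Pic(S))$ obtained from the realisation $S \cong \mathrm{Bl}_8 \pp^2$. Since $S$ is del Pezzo of degree one, the fixed subspace $\Pic(S)_\qq^{W(E_8)}$ is one-dimensional and spanned by $-K_S$, so $\NS(F)_\qq^{W(E_8)} = \qq\, \mu(-K_S) = \qq K_F$. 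The second step is to argue that $W(E_8)$ acts on $F$ by genuine biregular automorphisms, and not merely by pseudo-automorphisms. The Weyl group acts on $\mathrm{Bl}_8 \pp^4$ birationally via Cremona-type transformations and permutes the chambers of the Mori chamber decomposition of its movable cone; by the VGIT analysis of \cite{CCF17}, the Fano model $F$ corresponds to a distinguished chamber that is preserved by the entire Weyl action. Hence one obtains $W(E_8) \hookrightarrow \Aut(F)$ with induced action on $\NS(F)_\qq$ matching, via $\mu$, the Weyl action on $\Pic(S)_\qq$. Combining the two ingredients,
\[
\NS(F)_\qq^{\Aut(F)} \subset \NS(F)_\qq^{W(E_8)} = \qq K_F,
\]
and since $K_F$ is always $\Aut(F)$-invariant, equality holds and Theorem \ref{suff_criterion} gives that $F$ is fibre-like.

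The main obstacle is the second step: ensuring that each reflection in $W(E_8)$ descends to a biregular automorphism of $F$ rather than just a flop-like pseudo-isomorphism. Elements of $W(E_8)$ generically cross walls of the movable cone, so one must identify the precise nef chamber cut out by the stability condition defining $F$ and verify that this chamber is stabilised by the whole group. This is exactly where the explicit birational geometry developed in \cite{CCF17} -- the moduli interpretation together with the variation of stability -- is indispensable, and in the present plan it would be invoked as a black box to promote the a priori birational Weyl action to a regular one.
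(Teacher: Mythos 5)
First, note that the paper itself gives no proof of this statement: it is quoted verbatim from \cite[Proposition 6.22]{CCF17}, and the surrounding discussion only sketches the strategy used there -- identify the Fano model $F$ with a moduli space of rank-two vector bundles on a degree-one del Pezzo surface $S$, use the variation of stability to control the birational geometry and hence $\Aut(F)$, and conclude via Theorem \ref{suff_criterion}. Your proposal follows this strategy in outline, and your first step (a Mukai-type identification $\NS(F)_\qq\simeq\Pic(S)_\qq$, equivariant for automorphisms of $S$) is consistent with what is actually done in \cite{CCF17}.

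The gap is in your second step, and it is not merely a missing verification: the claim that $W(E_8)$ embeds into $\Aut(F)$ is false for a general configuration of the $8$ points. The Weyl group acts on $\NS(X^4_8)$ as a group of lattice isometries, but a given reflection is realised by a pseudo-automorphism of $X^4_8$ only when the corresponding Cremona transformation sends the chosen point configuration to a projectively equivalent one; in general it yields a pseudo-isomorphism onto the blow-up of a \emph{different} configuration, hence onto a non-isomorphic variety (compare the proof of Theorem \ref{main_AC}, where only the subgroup $(\zz/2\zz)^{n+2}$ of $W(D_{n+3})$, not the whole Weyl group, is realised geometrically). For $8$ general points the stabiliser of the configuration inside $W(E_8)$ is generated by the single element $w_0=-\mathrm{id}_{K^{\perp}}$, realised by the Bertini involution of $S$; this involution fixes the relevant Chern character (its $c_1$ is proportional to $K_S$) and therefore induces a biregular involution $\iota$ of $F$. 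So \cite{CCF17} cannot be invoked as a black box for $W(E_8)\hookrightarrow\Aut(F)$; it proves the much weaker statement, which is fortunately all you need: since $\iota^{*}$ acts as $-\mathrm{id}$ on $K_F^{\perp}\subset\NS(F)_\qq$, already $\NS(F)_\qq^{\langle\iota\rangle}=\qq K_F$, and Theorem \ref{suff_criterion} applies. Your chamber argument (a pseudo-automorphism fixing $K_F$ preserves the chamber containing the ample class $-K_F$, hence is biregular on the Fano model) is sound and is precisely why $\iota$ is regular on $F$; the error lies only in assuming that all of $W(E_8)$ acts by pseudo-automorphisms in the first place.
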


We now focus on the first example. We use the results of \cite{AC17} and \cite[Section 3]{R72}. Take an even integer $n=2m\ge 2$ and consider a smooth complete intersection $Z$ of two quadrics in $\pp^{n+2}$. 
Let $\mathcal{F}_{m-1}=\mathcal{F}_{m-1}(Z)$ be the variety of $(m-1)$-planes in $Z$. This is a smooth Fano variety of dimension $n$. It can be seen as a higher dimensional generalisation 
of the quartic del Pezzo surface and has been extensively studied in the recent work \cite{AC17}. 

The geometry of $\mathcal{F}_{m-1}$ can be studied from another point of view, which we briefly recall here (see the survey \cite{C17} for the notation about Mori Dream Spaces). Let $X:=X^n_r$ be the blow up of $\pp^n$ at $r$ points in general position. 
Then it follows from \cite{M05} and \cite[Theorem 1.3]{CT06} that $X$ is a Mori Dream Space if and only if $n$ and $r$ verify the inequality
\begin{equation}\label{MDS_inequality}
\frac{1}{n+1}+\frac{1}{r-n-1}> \frac{1}{2},
\end{equation}
cf. \cite[Example 3.6]{C17}.

The manifolds appearing in Theorem \ref{thm_CCF} are, in this notation, obtained as Fano models of~$X^4_8$.

Look at $X^n_{n+3}$, with $n\ge 2$ even. For this class, inequality \eqref{MDS_inequality} holds, so $X^n_{n+3}$ is a Mori Dream Space and we can consider its Fano model $F^n_{n+3}$. 
Bauer in \cite{B91} proved that $X^n_{n+3}$ and $\mathcal{F}_{m-1}$ are isomorphic in codimension one -- see also \cite[Theorem 1.4]{AC17}. 
By \cite[Remark 4.10]{AC17} it follows that $\mathcal{F}_{m-1}$ is actually isomorphic to $F^n_{n+3}$. In \cite[Proposition 7.1]{AC17} the authors
describe the automorphism groups of $\mathcal{F}_{m-1}$ showing that
\[
\bigg(\big(\zz\big\slash 2\zz\big)^{n+2} \subseteq \bigg) \Aut(\mathcal{F}_{m-1})\subseteq W(D_{n+3}) \bigg(= \big(\zz\big\slash 2\zz\big)^{n+2} \rtimes S_{n+2}\bigg),
\]
where $W(D_{n+3})$ is the Weyl group of automorphism of a $D_{n+3}$-lattice. 
The inclusion $(\mathbb{Z}\slash 2\mathbb{Z})^{n+2} \subset \Aut(\mathcal{F}_{m-1})$ is an actual equality for a general choice of $\mathcal{F}_{m-1}$. 
The action of $(\zz\big\slash 2\zz\big)^{n+2} $ can be described by presenting $Z$ as the locus
$$
\sum_{i=0}^{n+3}x_i^2=\sum_{i=0}^{n+3}\lambda_i x_i^2=0.
$$
Then the group acts by changing the signs of the coordinates. We can use this to prove the following.

\begin{theorem}\label{main_AC}
Let $n=2m\ge 4$ be an integer. Then the smooth $n$-dimensional Fano variety $\mathcal{F}_{m-1}(Z)$ of $(m-1)$-planes in the intersection of two quadrics $Z\subset \pp^{n+2}$ is fibre-like.
\end{theorem}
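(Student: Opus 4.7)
The strategy is to invoke the sufficient criterion of Theorem \ref{suff_criterion}, namely to establish that $\NS(\mathcal{F}_{m-1})_\qq^{\Aut(\mathcal{F}_{m-1})} = \qq K_{\mathcal{F}_{m-1}}$. The starting point is the isomorphism $\mathcal{F}_{m-1}(Z) \cong F^n_{n+3}$ recalled just before the statement: via this identification $\NS(\mathcal{F}_{m-1})_\qq$ has rank $n+4$ and is naturally identified with $\NS(X^n_{n+3})_\qq$ (since $F^n_{n+3}$ and $X^n_{n+3}$ are isomorphic in codimension one), a $\qq$-vector space freely generated by $H$, the pull-back of the hyperplane class from $\pp^n$, and the exceptional divisors $E_0,\dots,E_{n+2}$.

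The next step is to describe explicitly the action on $\NS_\qq$ of the subgroup $(\zz/2\zz)^{n+2} \subseteq \Aut(\mathcal{F}_{m-1})$ recalled in the excerpt. Presenting $Z$ in the diagonal form $\sum x_i^2 = \sum \lambda_i x_i^2 = 0$, each generator $\sigma_i\colon x_i \mapsto -x_i$ preserves $Z$ and induces an involution of $\mathcal{F}_{m-1}$ which, under the identification with $F^n_{n+3}$, corresponds to a pseudo-automorphism of $X^n_{n+3}$; following \cite[Section 7]{AC17} these involutions realise reflections in the Weyl group $W(D_{n+3})$ acting on $\NS(X^n_{n+3})_\qq$ in the classical Cremona fashion, rather than as a mere permutation of the $E_i$'s.

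The key computational step is then to verify that the invariant subspace $\NS_\qq^{(\zz/2\zz)^{n+2}}$ is exactly the line $\qq K_F$. Since $W(D_{n+3})$ acts irreducibly on the rank-$(n+3)$ root lattice embedded in $K_F^\perp \subset \NS(\mathcal{F}_{m-1})_\qq$, it suffices to show that the subgroup generated by the $\sigma_i$'s already cuts this lattice down to zero invariants. If the sign-change subgroup alone leaves a larger invariant subspace, the fallback is to enlarge the acting group by the $S_{n+3}$-monodromy obtained by varying $Z$ in the moduli of pencils of quadrics (permuting the eigenvalues $\lambda_i$): the resulting group is $W(D_{n+3})$ itself, with $\qq K_F$ as its invariants, and a Mori fibre space with general fibre $\mathcal{F}_{m-1}$ can then be constructed directly by taking the universal family over this moduli, compactifying, and running a relative MMP.

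The main obstacle I anticipate is the second step: writing down the explicit matrix of each sign-change involution on the basis $\{H, E_0, \dots, E_{n+2}\}$, which requires an honest transcription of the Bauer-type identification between $\mathcal{F}_{m-1}(Z)$ and $F^n_{n+3}$ into linear algebra on $\NS$. Should the automorphism group of a very general $\mathcal{F}_{m-1}$ turn out to be insufficient on its own to reduce the invariants to a line, the monodromy/MMP route sketched above requires additional care to ensure that the output is genuinely a Mori fibre space with $\mathcal{F}_{m-1}$ appearing over the smooth locus of the fibration.
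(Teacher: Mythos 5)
Your strategy coincides with the paper's: apply Theorem \ref{suff_criterion} after identifying $\NS(\mathcal{F}_{m-1})_\qq$ with $\NS(X^n_{n+3})_\qq$ (rank $n+4$) and showing that the sign-change group $G=(\zz/2\zz)^{n+2}$ leaves only $\qq K$ invariant. However, the decisive step --- writing down how each generator of $G$ actually acts on the basis $\{H,E_0,\dots,E_{n+2}\}$ --- is exactly what you defer as ``the main obstacle,'' and it is the entire content of the paper's proof. The paper fills this in concretely: by \cite[Sections 4.4--4.6]{D04} and \cite[Remark 7.2]{AC17}, the generators correspond to pseudo-automorphisms $\phi_{i,j}$ of $X^n_{n+3}$ given on $\pp^n$ as $\rho\circ\iota$ (standard Cremona involution composed with a diagonal projective transformation); each $\phi_{i,j}$ exchanges the exceptional divisors $E_i$ and $E_j$ and fixes $K$, from which $\NS(X^n_{n+3})_\qq^G=\qq K$ follows at once, and \cite[Proposition 5.4]{AC17} transports this to $\NS(\mathcal{F}_{m-1})^G$. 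Your observation that the even-sign-change subgroup of $W(D_{n+3})$ has no nonzero invariants on the reflection representation is correct and would close the gap, but only once you have verified that the $G$-action on $K^\perp$ really is that representation --- which is precisely the transcription you do not carry out.

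Your fallback is also not a safe exit. Enlarging $G$ to $W(D_{n+3})$ by monodromy and computing $\NS^{\Mon}$ only verifies the \emph{necessary} condition from \cite{CFST_16}, not the sufficient one of Theorem \ref{suff_criterion}; and the proposal to compactify the universal family and run a relative MMP does not by itself produce a Mori fibre space with $\mathcal{F}_{m-1}$ as a fibre over the smooth locus --- one must still control that the output has relative Picard rank one and that the MMP does not modify the general fibre, which is exactly the bookkeeping that the automorphism criterion is designed to replace. So the proposal is the right approach with the central computation missing, and the proposed alternative does not supply it.
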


\begin{proof}
Consider the isomorphism $H^2(\mathcal{F}_{m-1}, \zz) \simeq \NS(\mathcal{F}_{m-1}).$ The action of $G:=\big(\zz\big\slash 2\zz\big)^{n+2}$ via pseudo-automorphisms of $X^n_{n+3}$ is explicitly described in \cite[Sections 4.4-4.6]{D04} 
(see also \cite[Remark 7.2]{AC17}). Let $x_0, \ldots, x_{n+2} \in \pp^n$ be blown-up points; we can assume that the first $n+1$ are the coordinate points and
\begin{itemize}
\item $x_{n+1}=[1:\ldots : 1]$;
\item $x_{n+2}=[c_0:\ldots : c_{n}]$.
\end{itemize}
The pseudo-automorphism $\phi_{n+2,n+3}\colon X^n_{n+3} \to X^n_{n+3}$ is defined on $\pp^n$ as $\rho \circ \iota$, 
where $\iota\colon \pp^n \dashrightarrow \pp^n$ is the standard Cremona involution and $\rho$ is the 
diagonal projective transformation $[t_0:\ldots : t_n] \mapsto [c_0t_0:\ldots : c_nt_n]$. 
Analogously, one defines $\phi_{i,j}$ with $i<j$, which exchanges the exceptional divisors of $X^n_{n+3}$ and 
fixes $K_{X^n_{n+3}}$. This implies that $\NS(X^n_{n+3})_{\qq}^G=\qq K_{X^n_{n+3}} $. 
Moreover, the analysis in \cite[Proposition 5.4]{AC17} implies that $\NS(X^n_{n+3})^{G}\simeq \NS(\mathcal{F}_{m-1})^{G}$. 
So we apply Theorem \ref{suff_criterion} to conclude.
\end{proof}


\section{Open questions}
\label{sec_questions}

We conclude this note with some questions regarding fibre-like varieties that are still open.

Since all the examples of smooth toric fibre-like Fano varieties are vertex transitive, we ask the following:

\begin{question}\label{v_tran}
	Is any fibre-like toric Fano manifold vertex-transitive?
\end{question}

Answering this question affermatively would give a complete classification of fibre-like toric Fano manifolds.

On a different note, the study of fibre-like Fano varieties in positive characteristic 
seems to be still very far from being satisfactory.\newline
After the recent developments for the MMP in positive characteristic for threefods 
(cf. \cite{Hacon_Xu_2015}, \cite{Birkar_2016}, \cite{BW_2017}), the picture that we delineated 
in the Introduction holds almost in the same way in characteristic $>5$ as long as we only
focus in dimension $2$ and $3$. Hence, it is natural to try to extend the results in \cite{CFST_16} 
to positive characteristic. 
If $k$ is any algebraically closed field and $F$ is a smooth Fano variety over $k$, the definition 
of fibre-likeness still makes sense and one can in particular ask the following:

\begin{question}\label{char_p}
 If $\ch k=p>0$, are there sufficient or necessary conditions that determine whether a smooth Fano $F$ is fibre-like?
\end{question}

At present time, the situation appears to be quite obscure:
we do not even know if a del Pezzo surface of degree $8$ 
is fibre-like in positive characteristic. The approach outlined in \cite{CFST_16} relaying on the study of a suitable 
monodromy action on the N\'eron-Severi does not generalize directly to this case. 

\bibliographystyle{alpha}
\bibliography{./bibliografia}

\end{document}